\newcommand{\sawfrac}[2]{\genfrac{\langle}{\rangle}{}{1}{#1}{#2}}
\newcommand{\sawtooth}[1]{\left\langle #1\right\rangle}
\DeclareMathOperator{\lk}{lk}
\def\R{{\mathbb R}}
\def\C{{\mathbb C}}
\def\Z{{\mathbb Z}}
\def\A{{\mathcal{A}}}
\def\V{{\mathcal{V}}}
\def\E{{\mathcal{E}}}
\def\ol#1{\overline{#1}}
\def\wt#1{\widetilde{#1}}
\def\s{\mathbf{s}}
\def\Rs{\mathbf{R}}
\def\os{\overline{\sigma}}
\def\sm{\setminus}
\def\mp{m_{\textrm{p}}}
\def\mq{m_{\textrm{q}}}
\def\Mp{M_{\textrm{p}}}
\def\Mq{M_{\textrm{q}}}
\def\Sl{S_{\textrm{link}}}
\def\Sv{S_{\textrm{leaf}}}
\def\Sn{S_{\textrm{node}}}
\def\Sa{S_{\textrm{arr}}}
\def\Se{S_{\textrm{edge}}}
\def\ap{\alpha_{\textrm{p}}}
\def\aq{\alpha_{\textrm{q}}}
\def\bp{\beta_{\textrm{p}}}
\def\spp{s_{\textrm{p}}}
\def\sqq{s_{\textrm{q}}}
\def\p{\partial}
\def\cp{c_{\textrm{p}}}
\def\Gp{\Gamma_{\textrm{p}}}
\def\Gq{\Gamma_{\textrm{q}}}
\def\vn{v^{\#}}
\def\dvn{d_v^{\#}}
\theoremstyle{definition}
\newtheorem{example}[equation]{Example}
\newtheorem{definition}[equation]{Definition}
\theoremstyle{plain}
\newtheorem{assumption}[equation]{Assumption}
\newtheorem{lemma}[equation]{Lemma}
\newtheorem{proposition}[equation]{Proposition}
\newtheorem{theorem}[equation]{Theorem}
\newtheorem*{mtheorem}{Theorem}
\theoremstyle{remark}
\newtheorem*{acknowledgements}{Acknowledgements}
\newtheorem{remark}[equation]{Remark}
\numberwithin{equation}{subsection}
\title[Signatures of graph links]{The average signature of graph links}
\author{Maciej Borodzik}
\address{Institute of Mathematics, University of Warsaw, ul. Banacha 2,
02-097 Warsaw, Poland}
\email{mcboro@mimuw.edu.pl}
\author{Jadwiga Sosnowska}
\address{Institute of Mathematics, University of Warsaw, ul. Banacha 2,
02-097 Warsaw, Poland}
\email{js277891@students.mimuw.edu.pl}
\thanks{Both authors were supported by  Polish OPUS grant No 2012/05/B/ST1/03195}
\date{\today}
\subjclass[2010]{primary: 57M25}
\keywords{Tristram--Levine signatures, Dedekind sums, $M$--number, average signature, graph link}
\begin{document}
\begin{abstract}
We compute the average Tristram---Levine signature of any graph link with positive weights in a three sphere.
The main tools are Neumann's algorithm
for computing the equivariant signatures of graph links and the Reciprocity Law for Dedekind sums.
\end{abstract}
\maketitle

\section{Introduction}
\subsection{Statement of results}

Let $L\subset S^3$ be a link. If $S$ is a Seifert matrix for $L$, the \emph{Tristram--Levine} signature of $L$ is the piecewise constant function
from the unit circle in $\C$ to $\Z$ given by
\begin{equation}
\sigma_L(t)=\textrm{signature of the hermitian form } (1-t)S+(1-\ol{t})S^T.\label{eq:signature}
\end{equation}
The Tristram--Levine signature does not depend on the choice of the Seifert matrix. We consider also the \emph{average signature} of $L$ defined as
\begin{equation}\label{eq:average}
\os(L)=\int_0^1 \sigma_L(e^{2\pi ix})dx.
\end{equation}

The main result of the present paper is Theorem~\ref{thm:main}, which we now state.
\begin{mtheorem}
Let $L$ be a graph link in $S^3$, which is neither the unknot nor the Hopf link. Let $\Gamma$ be an underlying graph. Suppose it is almost minimal
(see Section~\ref{ss:AMD}) and let $S(\Gamma)$ be
as defined in Section~\ref{ss:sgamma}. Then
\[\os(L)=-\frac13 S(\Gamma).\]
\end{mtheorem}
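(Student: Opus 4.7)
The plan is to feed Neumann's algorithm for the equivariant signature into the integral defining $\os(L)$ and then unpack the resulting integrals of sawtooth products using Dedekind reciprocity. Neumann's algorithm expresses $\sigma_L(e^{2\pi ix})$ as a finite $\Z$-linear combination of piecewise constant functions of $x$ built out of fractional-part and sawtooth terms $\sawfrac{ax}{b}$, with the pairs $(a,b)$ read off from the weights on the splice diagram at each node of $\Gamma$. The almost minimality of $\Gamma$ is precisely what is needed to write this expansion canonically and eliminates spurious contributions from leaves and valence-two vertices.

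First I would substitute Neumann's formula into $\os(L)=\int_0^1 \sigma_L(e^{2\pi ix})\,dx$ and interchange summation with integration. Each resulting one-dimensional integral of a product (or difference) of sawtooth functions is classically expressible as a Dedekind sum $s(a,b)$ together with an elementary rational correction in $a,b$. Collecting these contributions, $\os(L)$ becomes a linear combination of Dedekind sums indexed by the edges emanating from each node of the splice diagram, plus explicit rational terms.

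Next, I would apply the Reciprocity Law edge by edge. Each interior edge of $\Gamma$ joins two nodes and contributes a Dedekind sum $s(a,b)$ from one endpoint and $s(b,a)$ from the other; reciprocity collapses such a pair into
\[
s(a,b)+s(b,a)=-\tfrac14+\tfrac{1}{12}\!\left(\tfrac{a}{b}+\tfrac{b}{a}+\tfrac{1}{ab}\right).
\]
After performing all such cancellations and combining them with the rational corrections from the previous step, the remaining expression should reassemble, up to the universal factor $-\tfrac13$, into precisely the graph invariant $S(\Gamma)$ of Section~\ref{ss:sgamma}.

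The step I expect to be the main obstacle is not the Dedekind manipulation itself but the bookkeeping around it: faithfully tracking Neumann's sign conventions through the integration, handling leaves of $\Gamma$ where an edge has only one node-endpoint and hence no reciprocity partner, and verifying that the almost minimality hypothesis leaves exactly those residual terms that aggregate into $S(\Gamma)$ with the clean coefficient $-\tfrac13$. The exclusion of the unknot and the Hopf link should avoid degenerate diagrams in which the formalism would require separate accounting.
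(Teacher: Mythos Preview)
Your overall strategy---express $\os(L)$ via Neumann's algorithm as a sum of Dedekind sums, then collapse them with reciprocity---is the paper's strategy too. But the key mechanism you propose is not correct as stated. Neumann's formula at a node $v$ with multiplicity $m_v$ produces, for each adjacent edge $e$, a Dedekind sum $\s(s_e,m_v)$ where $s_e=(m_e-\beta_e m_v)/\alpha_e$ involves the modular inverse $\beta_e$. At the other end $w$ of the same edge you get $\s(s_e',m_w)$ with a \emph{different} second argument $m_w$. These are not of the form $\s(a,b)$ and $\s(b,a)$, so the basic Reciprocity Law does not collapse them edge by edge in the way you describe. Indeed, when the edge carries a weight $p>1$, the paper needs Pommersheim's three-term relation (their Proposition on $\s(p,q)+\s(u,v)$), not just ordinary reciprocity, to make the cancellation go through; this is exactly the step in \eqref{eq:magictrick}.

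The paper avoids trying to cancel across a general interior edge directly. Instead it argues by induction on the diameter of $\Gamma$: cut an edge, complete each piece by splicing on an elementary graph $\Gamma(a,b)$, and compare $S(\Gamma)$ with $S(\wt\Gamma_1)+S(\wt\Gamma_2)$ and $\os(\Gamma)$ with $\os(\wt\Gamma_1)+\os(\wt\Gamma_2)$ using splice additivity. The completion is the device that manufactures a genuine reciprocity pair; the base cases (simple diameter-$3$ graphs) are then checked by hand, and even there the Pommersheim relation is required. Your sketch would need to supply an analogous mechanism before the edge-by-edge cancellation becomes believable.
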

Below we shall review the theory of graph links, we refer to \cite{EN} for a good introduction.

The most important message of this theorem is that $S(\Gamma)$ is a very simple function. It involves computing greatest common divisors of multiplicities
associated to some edges, beside this it depends rationally on the weights and multiplicities of the graph.
In fact, it is very easy to give a formula for $\os(L)$ for
a graph link involving Dedekind sums (it is a sum of entries from \eqref{eq:integralsplice} over splice components), but such a formula is in general
extremely hard to apply and does not give any insight into the value of $\os(L)$.

\subsection{Motivation}\label{ss:motivation}
Let us put the main result in a broader context. To start with, assume that $K\subset S^3$ is a knot. Historically, the first motivation to study the average
signature $\os(K)$ comes from signatures of branched covers, see \cite{GLM,Vi}. Let $F$ be a Seifert surface for $K\subset S^3=\p B^4$
and let us push its interior into the four ball $B^4$. Let $m>0$ be an integer and let $N_m$ be an $m$-fold cyclic
branched cover of $B^4$ along $F$. This is a four manifold.
Its signature $\tau_m:=\sigma(N_m)$ turns out to depend only on $K$ and $m$, more precisely by \cite{Vi}
\[\tau_m=\sum_{\xi\colon \xi^m=1}\sigma_K(\xi).\]
In particular, $\os(K)=\lim_{m\to\infty}\frac{1}{m}\tau_m$, that is $\os(K)$ tells us about the asymptotic behavior of $\tau_m$. This is of a special interest
in singularity theory. If $K$ is an algebraic knot, that is, a knot resulting by intersecting a zero set of a polynomial $f\colon \C^2\to\C$ with a small sphere
around $0\in\C^2$, then $N_m$ is a link of the surface singularity $\{(x,y,z)\in\C^3\colon f(x,y)+z^m=0\}$. Singularities of that type are of special interest,
we refer to \cite{Nem2} for more details.

The main problem while dealing with $\os(K)$ is that this quantity is often very hard to compute and the results do not have to be especially nice. For example,
consider the pretzel knot $P(p,q,r)$ (suppose for simplicity of the discussion that $p,q,r>0$). Its Alexander polynomial $\Delta$ satisfies
\[4\Delta(t)=t(pq+qr+pr+1)-2(pq+qr+pr-1)+(pq+qr+pr+1)t^{-1}.\]
Writing $A=pq+qr+pr$ we obtain that $\Delta$ has two roots $z_{\pm}:=\frac{A-1\pm 2i\sqrt{A}}{A+1}$. They are on the unit circle and let $\theta_\pm\in[0,1)$
be such that $e^{2\pi i\theta_{\pm}}=z_{\pm}$. Since the Tristram--Levine signature has jumps precisely at the roots of the Alexander polynomial and these
roots are simple, we infer that $\sigma_K(e^{2\pi i\theta})=0$ if $\theta\in[0,\theta_+)\cup(\theta_-,1]$ and $\sigma_K(e^{2\pi i\theta})=-2$ if
$\theta\in(\theta_+,\theta_-)$. Hence $\os(K)=\theta_--\theta_+$. This quantity, in general, is a rather complicated
analytic function of $A$. For pretzel knots with more than three strands, the formulae can be much more involved.

In \cite{KM} Kirby and Melvin showed that the average signature of a torus knot $T_{p,q}$ is equal to $-\frac13(p-\frac{1}{p})(q-\frac{1}{q})$.
By \cite[Theorem 2]{Li}, this immediately gives a closed formula for $\os(K)$ for any iterated torus knot. This result was then reproved independently
by \cite{Nem2,Bo}. As the computation of $\os(T_{p,q})$ from the formula for the Tristram--Levine signature of the torus knot (as in \cite{Li})
uses a priori Dedekind sums,  any proof of the Kirby--Melvin theorem either involves the Reciprocity Law for
Dedekind sums; or it gives another proof of the Reciprocity Law.

A new interest in computing $\os(K)$ is motivated by pioneering works of Cochran, Orr and Teichner \cite{COT1,COT2}.
The quantity $\os(K)$ turns out to be the von Neumann $\rho$--invariant (also known as the $L^2$--signature)
associated with the representation $\pi_1(S^3\sm K)\to\Z$ given by abelianization. We refer to \cite{COT1,COT2} for background on von Neumann
$\rho$--invariants for knot complements. In general, $\rho$--invariants obstruct sliceness and give insight into the structure of the topological concordance group.
We refer to \cite{Co,CHL,D1} for some exemplary applications; in fact, the literature on the subject is now very vast. We also point out that the
average signature of links with pairwise linking number~$0$ was studied to detect sliceness of some knots, see \cite{CHL,D1,D2}.

Another recent motivation for studying $\os(L)$, this time for algebraic links, is its relation to singularity theory.
In \cite{Bo} the average signature of an algebraic knot was related to an invariant of the singular point, called the $M$--number
(see \cite{Orev02} for the definition).
Using this relation, a bound for $M$--numbers under a deformation of cuspidal singular points was obtained \cite{Bo2}.
We expect a similar relation to hold for general, that is not necessarily cuspidal, singular points. This would allow
to extend results from \cite{Bo2} to more general classes of deformations. Computing the average signature of an algebraic link is the first
step towards establishing such a relation.

In the present article we deal with graph links in $S^3$. Graph links were introduced by Eisenbud and Neumann \cite{EN} as a generalization of
iterated torus knots. Any graph link can be combinatorially encoded in a graph (hence the name).
We explain this relation in Section~\ref{ss:ggl} and refer to \cite{EN} for more details.
Any algebraic link is a graph link in $S^3$. Therefore, our result gives the average Tristram--Levine signature for all algebraic links in $S^3$.

\subsection{Structure of the article}

The structure of the article is the following. After an overview of the necessary background on graph links and Dedekind sums, we provide
in Theorem~\ref{thm:main} an algorithm for computing the average signature of a graph link.
Then we prove this theorem in Section~\ref{s:proofofmain}. The idea is as follows. By an algorithm of Neumann \cite{Neu1},
one can write down the average signature of any splice component, see \eqref{eq:integralsplice}. Then we use
splice additivity of signatures, Lemma~\ref{lem:spladds}, to obtain a general formula for the average signature.
Unfortunately, the result involves many
Dedekind sums.

Using the Reciprocity Law we will show in Section~\ref{s:proofofmain}, that all the Dedekind sums that appear, can be simplified.
After somewhat lengthy, but rather straightforward computations, we obtain the desired result.

\begin{acknowledgements}
We would like to express thanks to Chris Davis for his interest in our work and for valuable comments and to Andrew Ranicki for his suggestions and help
during the
preparation of the manuscript. We are also grateful to the referee for helpful comments.
The first author is grateful to Indiana University for hospitality.
\end{acknowledgements}

\section{Graph links}
\subsection{Review of the theory of graph links}\label{ss:graph}
We begin with explaining in detail, what is a splice graph.
We refer to \cite{EN} or \cite{NW} for
a more detailed exposition. Later, in Section~\ref{ss:ggl} we shall explain how a graph gives rise to a graph link.

Throughout the paper a \emph{splice graph} $\Gamma$ will denote a collection $(\V,\A,\E)$ of the \emph{ordinary vertices} $\V$,
the \emph{arrowhead vertices} $\A$ and the \emph{edges} $\E$. For an ordinary vertex $v\in\V$, we denote by $\nu(v)$ its \emph{valency}, that
is the number of edges entering $v$. Vertices with valency $3$ or more are called \emph{nodes}, those with valency $1$ are called \emph{leaves}.
We assume that there are no vertices of valency $2$ and that there is at least one node. The valency of an arrowhead vertex is always $1$.
We will assume that $\Gamma$ is a tree, that is, it is connected and has no loops.

The graph $\Gamma$ is also assumed to have the following labelling by non-negative integers:
each arrowhead vertex $a\in\A$ is labelled by an integer $m_a$ called the \emph{multiplicity}.
On each edge $e\in\E$ connecting two vertices $v,w\in\A\cup\V$ there are
two positive integer weights $d_{ve}$ and $d_{we}$, the first one near $v$ (it is called the weight of $e$ \emph{adjacent} to $v$), the other one near $w$.
For any two edges $e$ and $e'$ adjacent to the same node $v$, the weights $d_{ve}$ and $d_{ve'}$ are assumed to be coprime.
A weight adjacent to an arrowhead or a leaf is always equal to $1$ (usually it is omitted when one draws a graph). An example
of a splice graph is presented in Figure~\ref{fig:graph}.

\smallskip
For a node $v$ we denote by $d_v$ the product of all weights $d_{ve}$ over all edges $e$ adjacent to $v$. If $v$ is an arrowhead or a leaf,
we define $d_v$ in a different way.
Let $e$ be the unique vertex adjacent to $v$ and let $w$ be its other end.
It is necessarily a node. Then $w$ is called the \emph{nearest node to $v$}, we denote it by $\vn$, and $d_{we}$ is the \emph{nearest weight to $v$}:
we shall denote it by $\dvn$. The \emph{weight} of $v$ is defined as $d_v:=d_w/d_{we}^2$. Note, that $d_v$ is not necessarily
an integer.

For any two vertices $v,w\in\A\cup\V$ we define the \emph{linking number} $\lk(v,w)$ as the product of all the weights adjacent
to, but not lying on, the shortest path connecting $v$ with $w$ in $\Gamma$, see \cite[page 84]{EN}. If $v$ is a node, we
define its \emph{multiplicity} $m_v$ as the sum
\[m_v=\sum_{a\in\A}\lk(a,v).\]
For example, for the graph in Figure~\ref{fig:graph} and the vertex $v_6$ the multiplicity is $4\cdot 3+4\cdot 3+4\cdot 2=32$.

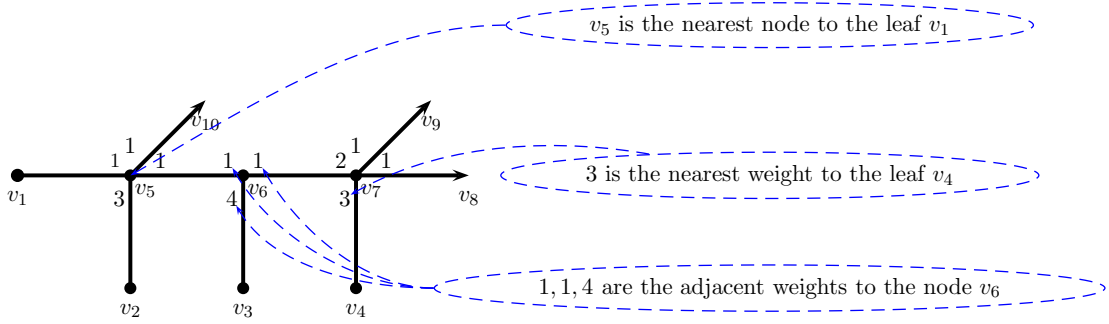
\begin{figure}[t]
\begin{pspicture}(0,-2)(8,3)
\psline[linewidth=1.5pt]{*->}(-3,0)(3,0)
\psline[linewidth=1.5pt]{->}(-1.5,0)(-0.5,1)
\psline[linewidth=1.5pt]{->}(1.5,0)(2.5,1)
\psline[linewidth=1.5pt](1.5,0)(1.5,-1.5)
\psline[linewidth=1.5pt](-1.5,0)(-1.5,-1.5)
\psline[linewidth=1.5pt](0,0)(0,-1.5)
\pscircle[fillstyle=solid,fillcolor=black](-3,0){0.08}
\pscircle[fillstyle=solid,fillcolor=black](-1.5,-1.5){0.08}
\pscircle[fillstyle=solid,fillcolor=black](-1.5,0){0.08}
\pscircle[fillstyle=solid,fillcolor=black](0,-1.5){0.08}
\pscircle[fillstyle=solid,fillcolor=black](0,0){0.08}
\pscircle[fillstyle=solid,fillcolor=black](1.5,0){0.08}
\pscircle[fillstyle=solid,fillcolor=black](1.5,-1.5){0.08}
\rput(-1.7,0.2){\rnode{W1}{\psscalebox{0.7}{$1$}}}
\rput(-1.1,0.2){\rnode{W2}{\psscalebox{0.8}{$1$}}}
\rput(-1.65,-0.3){\rnode{W3}{\psscalebox{0.8}{$3$}}}
\rput(-0.2,0.2){\rnode{W4}{\psscalebox{0.8}{$1$}}}
\rput(0.2,0.2){\rnode{W5}{\psscalebox{0.8}{$1$}}}
\rput(-0.15,-0.3){\rnode{W6}{\psscalebox{0.8}{$4$}}}
\rput(1.3,0.2){\rnode{W7}{\psscalebox{0.8}{$2$}}}
\rput(1.9,0.2){\rnode{W8}{\psscalebox{0.8}{$1$}}}
\rput(1.35,-0.3){\rnode{W9}{\psscalebox{0.8}{$3$}}}
\rput(-1.5,0.4){\rnode{W10}{\psscalebox{0.8}{$1$}}}
\rput(1.5,0.4){\rnode{W11}{\psscalebox{0.8}{$1$}}}
\rput(-3,-0.3){\psscalebox{0.8}{$v_1$}}
\rput(-1.5,-1.8){\psscalebox{0.8}{$v_2$}}
\rput(0,-1.8){\psscalebox{0.8}{$v_3$}}
\rput(1.5,-1.8){\psscalebox{0.8}{$v_4$}}
\rput(-1.3,-0.2){\psscalebox{0.8}{$v_5$}}
\rput(0.2,-0.2){\psscalebox{0.8}{$v_6$}}
\rput(1.7,-0.2){\psscalebox{0.8}{$v_7$}}
\rput(3,-0.3){\psscalebox{0.8}{$v_8$}}
\rput(2.5,0.7){\psscalebox{0.8}{$v_9$}}
\rput(-0.5,0.7){\psscalebox{0.8}{$v_{10}$}}
\rput(7,2){\ovalnode[linecolor=blue,linewidth=0.5pt,linestyle=dashed]{T1}{\psscalebox{0.8}{\textrm{$v_5$ is the nearest node to the leaf $v_1$}}}}
\rput(7,-1.5){\ovalnode[linecolor=blue,linewidth=0.5pt,linestyle=dashed]{T2}{\psscalebox{0.8}{\textrm{$1,1,4$ are the adjacent weights to the node $v_6$}}}}
\rput(7,0){\ovalnode[linecolor=blue,linewidth=0.5pt,linestyle=dashed]{T3}{\psscalebox{0.8}{\textrm{$3$ is the nearest weight to the leaf $v_4$}}}}
\rput(-1.5,0){\rnode{T4}{\hphantom{AA}}}
\nccurve[linestyle=dashed,angleA=180,linewidth=0.5pt,angleB=30,linecolor=blue]{->}{T1}{T4}
\nccurve[linestyle=dashed,angleA=180,linewidth=0.5pt,angleB=300,linecolor=blue]{->}{T2}{W4}
\nccurve[linestyle=dashed,angleA=180,linewidth=0.5pt,angleB=300,linecolor=blue]{->}{T2}{W5}
\nccurve[linestyle=dashed,angleA=180,linewidth=0.5pt,angleB=300,linecolor=blue]{->}{T2}{W6}
\nccurve[linestyle=dashed,angleA=170,linewidth=0.5pt,angleB=30,linecolor=blue]{->}{T3}{W9}
\end{pspicture}
\caption{An example of a graph link. We explain some terminology related to graph links.
The multiplicities of arrowheads are not presented.}\label{fig:graph}
\end{figure}

\subsection{Splicing and splice components}

There is one important procedure, namely the splicing of two graphs. It is easier to describe the inverse operation, which consists of cutting
an edge into two halves and changing them into arrowheads as in Figure~\ref{fig:splice}.

\begin{figure}[h]
\begin{pspicture}(-6,-1)(6,1.2)
\rput(-0.8,0){\psline(-6,-0.3)(-6,0.3)(-4.5,0.3)(-4.5,-0.3)(-6,-0.3)
\rput(-5.25,0){$\Gamma_1$}
\psline(-3.5,-0.3)(-3.5,0.3)(-2,0.3)(-2,-0.3)(-3.5,-0.3)
\rput(-2.75,0){$\Gamma_2$}
\rput(-6,0.5){\rnode{A}{}}
\rput(-2,0.5){\rnode{B}{}}
\psbrace(B)(A){\rotateright{$\Gamma$}}
\psline(-3.5,0)(-4.5,0)}
\psline[linewidth=1pt]{<-}(-2.3,0.1)(-0.2,0.1)\rput(-1.25,0.3){\psscalebox{0.8}{Splice}}
\psline[linewidth=1pt]{->}(-2.3,-0.1)(-0.2,-0.1)\rput(-1.25,-0.3){\psscalebox{0.8}{Cut}}
\rput(0.8,0){\psline(-0.5,-0.3)(-0.5,0.3)(1,0.3)(1,-0.3)(-0.5,-0.3)
\rput(0.25,0){$\Gamma_1$}
\psline(6.0,-0.3)(6.0,0.3)(4.5,0.3)(4.5,-0.3)(6.0,-0.3)
\rput(5.255,0){$\Gamma_2$}
\psline{->}(1,0)(2,0)\psline{->}(4.5,0)(3.5,0)
\rput(2.37,0.0){\psscalebox{0.8}{$(m_1)$}}
\rput(3.12,0.0){\psscalebox{0.8}{$(m_2)$}}}
\end{pspicture}
\caption{The graph $\Gamma$ on the left is a result of splicing of $\Gamma_1$ and $\Gamma_2$.}\label{fig:splice}
\end{figure}
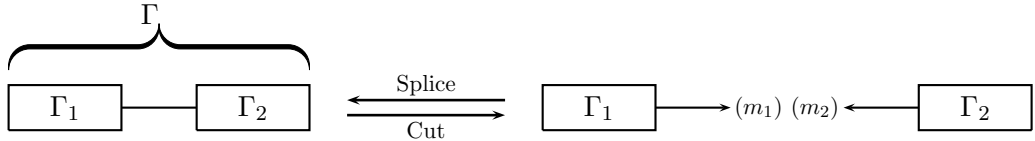

Here $m_1$ and $m_2$ are the multiplicities of the newly appeared arrowheads. They are uniquely determined by the condition that  if we cut the graph,
the multiplicities of all
nodes and leaves inside $\Gamma_1$ and $\Gamma_2$ are preserved. Splicing is the reverse procedure, it consists of taking two arrowheads of the two graphs
and joining them to form an edge connecting two graphs. In general, it is impossible to splice two graphs without
some conditions on the multiplicities of the arrowhead vertices $m_1$ and $m_2$. The whole procedure is described in details in \cite[pages 20--33]{EN}
or in \cite[Section 9]{NW}.

Given a graph $\Gamma$, we can decompose it as a union of so-called \emph{splice components}, where each splice component
contains exactly one node. A splice component is presented in Figure~\ref{fig:splicecomp}.

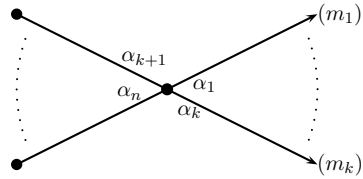
\begin{figure}[h]
\begin{pspicture}(-5,-2)(5,2)
\pscircle[fillcolor=black,fillstyle=solid](0,0){0.08}
\psline{->}(0,0)(2,1)\rput(2.3,1){\psscalebox{0.75}{$(m_1)$}}\rput(0.5,0.05){\psscalebox{0.75}{$\alpha_1$}}
\psline{->}(0,0)(2,-1)\rput(2.3,-1){\psscalebox{0.75}{$(m_k)$}}\rput(0.3,-0.3){\psscalebox{0.75}{$\alpha_k$}}
\psline(0,0)(-2,1)\pscircle[fillcolor=black,fillstyle=solid](-2,1){0.075}
\rput(-0.3,0.4){\psscalebox{0.75}{$\alpha_{k+1}$}}
\psline(0,0)(-2,-1)\pscircle[fillcolor=black,fillstyle=solid](-2,-1){0.075}
\rput(-0.5,-0.05){\psscalebox{0.75}{$\alpha_{n}$}}
\psarc[linestyle=dotted](0,0){2}{340}{20}
\psarc[linestyle=dotted](0,0){2}{160}{200}
\end{pspicture}
\caption{A splice component.}\label{fig:splicecomp}
\end{figure}

\subsection{Graphs and graph links}\label{ss:ggl}

In \cite{EN} it is shown that each graph such that the multiplicity of each arrowhead is $1$, gives rise to a link in a graph 3-manifold;
the arrowheads correspond to components of the link. If the multiplicity of some arrowheads are not all equal to $1$, we speak of a \emph{multilink},
see also \cite{EN}. There is a notion of a Seifert surface and a Seifert matrix for a multilink, the Tristram--Levine
signature for multilinks is defined by \eqref{eq:signature} and the average signature is as in \eqref{eq:average}.

Let us explain the connection of graphs and links in more detail. If $\alpha_1,\ldots,\alpha_n$
are pairwise coprime positive integers, the splice component in Figure~\ref{fig:splicecomp} represents a link in the Seifert homology sphere
$\Sigma:=\Sigma(\alpha_1,\ldots,\alpha_n)$. The link is the union of precisely $k$
fibers of the Seifert fibration $\Sigma\to S^2$, corresponding to $\alpha_1,\ldots,\alpha_k$.
In particular, if for some $j=1,\ldots,k$, $\alpha_j=1$, we take a non-singular fiber, otherwise we take a singular one.

Splicing two graphs corresponds to the following topological operation. Suppose we have two links $L=L_1\sqcup\ldots\sqcup L_m$ and $L'=L'_1\sqcup\ldots\sqcup L'_n$
in two integral homology spheres $M$ and $M'$ respectively. Consider $M\sm N(L_1)$ and $M'\sm N(L'_1)$ (here $N(K)$ is a tubular neighborhood of a knot $K$).
These are three manifolds whose boundary is a torus. We consider the link $L'':=L_2\sqcup\ldots\sqcup L_m\sqcup L'_2\sqcup\ldots\sqcup L'_n$ in the manifold
\[M''=(M\sm N(L_1))\#_f(M'\sm N(L'_1)),\]
where $f\colon S^1\times S^1\to S^1\times S^1$ exchanges the longitude and the meridian of $L_1$ and $L_1'$.
Then the pair $(M'',L'')$ is a result of splicing $(M,L)$ with $(M',L')$
along $L_1$ and $L_1'$. If $(M,L)$ is represented by a graph $\Gamma$ and $(M',L')$ by $\Gamma'$, then $(M'',L'')$ is represented by a splicing of $\Gamma$ and
$\Gamma'$ along arrowheads corresponding to $L_1$ and $L_1'$.

Let us make an important observation. If we forget about the link and look only at the underlying manifold $M$ (at the level of the graph it amounts to
replacing all the arrowheads by leaves), then the graph $\Gamma$ provides a JSJ decomposition of $M$.
Suppose $M\cong S^3$. Since a JSJ decomposition of the three sphere is trivial, all the splice components have to be
isomorphic to $S^3$. A Seifert fibred manifold $\Sigma(\alpha_1,\ldots,\alpha_n)$ is $S^3$ if and only if at most two of $\alpha_1,\ldots,\alpha_n$
are not equal to $1$. Therefore, if $M$ is a three sphere, the graph $\Gamma$ can not have any nodes with more than two adjacent weights different than $1$.
Throughout the paper we focus on the case when $M\cong S^3$, therefore from now on we shall make the following assumption about $\Gamma$.

\begin{assumption}\label{ass:ass2}
For any node at most 2 adjacent weights are different than $1$.
\end{assumption}
We point out that it is technically possible to give a formula for the average signature
in the general case, but one encounters additional problems in
sections~\ref{sec:graph2} and~\ref{sec:graph3}.

\subsection{The function $S(\Gamma)$}\label{ss:sgamma}

We shall define the function $S(\Gamma)$, which we shall use to compute the average signature of the underlying link.

\begin{definition}\label{def:SGamma}
For a graph $\Gamma$ we define
\[S(\Gamma)=\Sl(\Gamma)+\Sn(\Gamma)+\Sv(\Gamma)+\Se(\Gamma)+\Sa(\Gamma),\]
where
\begin{itemize}
\item[]{\bf (linking)}  The quantity $\Sl$ is twice the total linking number, that is
\[\Sl=\!\!\!\!\!\!\!\!\sum_{a,a'\in\A,a\neq a'}\lk(a,a').\]
\item[]{\bf (nodes)} The quantity $\Sn$ is the contribution of the nodes
\[\Sn=\!\!\!\!\!\!\!\!\sum_{v\in \V\colon\nu(v)>2}d_v(\nu(v)-2).\]
\item[]{\bf (leaves)} The quantity $\Sv$ comes from the leaves of $\Gamma$
\[\Sv=\!\!\!\!\!\!\!\!\sum_{v\in \V\colon\nu(v)=1}-d_v.\]
\item[]{\bf (edges)} The quantity $\Se$ is a sum of contributions of those edges that connect nodes. Let $e$ connect nodes $v$ and $w$ with
multiplicities $m_v$ and $m_w$. Suppose that upon cutting $\Gamma$ along $e$, the edge $e$ becomes two arrowheads with multiplicities $\mu_v$
and $\mu_w$. Set
$c=\gcd(\mu_v,\mu_w)$. The contribution of the edge $e$ to $\Se$ is equal to
\[
\begin{cases}
  c^2\left(\frac{d_{ve}}{\mu_vm_v}+\frac{d_{we}}{\mu_wm_w}-\frac{1}{\mu_v\mu_w}\right) & \textrm{if $\mu_v\mu_w\neq 0$}\\
  \frac{1}{d_v}-\frac{d_w}{d_{we}^2} & \textrm{if $\mu_w=0$}\\
  \frac{1}{d_w}-\frac{d_v}{d_{ve}^2} & \textrm{if $\mu_v=0$}.
\end{cases}
\]
\item[]{\bf (arrowheads)} The quantity $\Sa$ is a contribution of the arrowheads
\[\Sa=\!\!\sum_{a\in\A}\frac{d_a^\#}{m_{a^{\#}}},\]
where we recall that $a^{\#}$ is the nearest node to the arrowhead $a$ and $d_a^\#$ is the nearest weight to $a$.
\end{itemize}
\end{definition}

\goodbreak
\begin{remark}
  \begin{itemize}
\item[]
\item The formulae for $\Sn$ and $\Sv$ look similar and one could combine these two contribution in one term. However, the values of $d_v$
for $v$ a node and $v$ a leaf are different and computed in a different way.
\item There is a similarity between the function $S(\Gamma)$ and the function $W(\Gamma)$ defined in \cite[Section 4]{BZ}. The difference is equal to
$\Sl+\Se+\Sa$ and $\Sl$ has a clear topological meaning. We expect that the quantity $\Se+\Sa$
is small if $\Gamma$ is a graph of a link of a singularity. We know it is between $0$ and $2/9$, if $\Gamma$
is a link of a unibranched singular point, see \cite{Bo}.
\end{itemize}
\end{remark}

\begin{example}\label{ex:EN}
Let us consider the link in Figure~\ref{fig:EN} (it is taken from \cite[page 147]{EN}, only we changed the multiplicity of one arrowhead
vertex from $2$ to $1$). Both nodes of the graph are non-free. The quantity $S(\Gamma)$ is computed as follows.
\begin{itemize}
\item $\Sl=2\cdot(2\cdot 2\cdot 3)=24$. The $2$ in front comes from the fact that we compute the linking for each pair of arrowheads twice.
\item $\Sn=26\cdot(3-2)+6\cdot(4-2)=38$.
\item $\Sv=-\frac{6}{3^2}-\frac{6}{2^2}-\frac{13 \cdot 2}{2^2}=-\frac{2}{3}-\frac{3}{2}-\frac{13}{2}$.
\item To compute $\Se$ we observe that there is one edge connecting nodes. The multiplicities of the nodes are $M_v=38$ and $M_w=18$,
upon cutting the edge, the multiplicities of the two arrowheads are $m_v=6$ and $m_w=2$, hence $c=2$. We get
\[\Se=4\left(\frac{13}{38\cdot 6}+\frac{1}{18\cdot 2}-\frac{1}{6\cdot 2}\right)=\frac{1}{171}.\]
\item $\Sa$ is readily computed to be $\frac{1}{38}+\frac{1}{18}=\frac{14}{171}$.
\end{itemize}
We see that $S(\Gamma)=\frac{1015}{19}$.
\end{example}

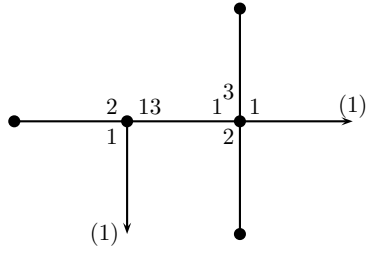
\begin{figure}
\begin{pspicture}(-5,-2)(5,2)
\psline{->}(-1.5,0)(3,0)
\psline(1.5,1.5)(1.5,-1.5)
\psline{->}(0,0)(0,-1.5)
\pscircle[fillstyle=solid,fillcolor=black](0,0){0.08}
\pscircle[fillstyle=solid,fillcolor=black](-1.5,0){0.08}
\pscircle[fillstyle=solid,fillcolor=black](1.5,0){0.08}
\pscircle[fillstyle=solid,fillcolor=black](1.5,1.5){0.08}
\pscircle[fillstyle=solid,fillcolor=black](1.5,-1.5){0.08}
\rput(-0.2,0.2){\psscalebox{0.8}{$2$}}
\rput(0.3,0.2){\psscalebox{0.8}{$13$}}
\rput(-0.2,-0.2){\psscalebox{0.8}{$1$}}
\rput(-0.3,-1.5){\psscalebox{0.8}{$(1)$}}
\rput(1.2,0.2){\psscalebox{0.8}{$1$}}
\rput(1.35,0.4){\psscalebox{0.8}{$3$}}
\rput(1.35,-0.2){\psscalebox{0.8}{$2$}}
\rput(1.7,0.2){\psscalebox{0.8}{$1$}}
\rput(3,0.2){\psscalebox{0.8}{$(1)$}}
\end{pspicture}
\caption{Splice graph from Example~\ref{ex:EN}.}\label{fig:EN}
\end{figure}

\subsection{Almost minimal diagrams}\label{ss:AMD}

The correspondence between graphs and graph links is not one to one. In fact, two graphs can give rise to the same link. For example, for a splice component
as in Figure~\ref{fig:splicecomp}, adding a leaf connected to the node, such that the near weight to the node is $1$, amounts to changing the Seifert
manifold from $\Sigma(\alpha_1,\ldots,\alpha_n)$ to $\Sigma(\alpha_1,\ldots,\alpha_n,1)$, the two manifolds are isomorphic and the links are the same.
This phenomenon is
discussed in detail in \cite[Section 8]{EN}, we shall need only a small portion of it.

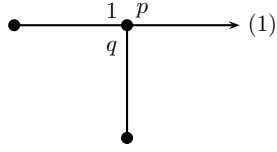
\begin{figure}
\begin{pspicture}(1,-2)(6,1)
\psline{->}(2,0)(5,0)
\psline(3.5,0)(3.5,-1.5)
\pscircle[fillstyle=solid,fillcolor=black](3.5,0){0.08}
\pscircle[fillstyle=solid,fillcolor=black](2,0){0.08}
\pscircle[fillstyle=solid,fillcolor=black](3.5,-1.5){0.08}
\rput(3.3,0.2){\psscalebox{0.8}{$1$}}
\rput(3.7,0.2){\psscalebox{0.8}{$p$}}
\rput(3.3,-0.3){\psscalebox{0.8}{$q$}}
\rput(5.3,0){\psscalebox{0.8}{$(1)$}}
\end{pspicture}
\caption{Non--trivial graph representing an unknot.}\label{fig:unknot}
\end{figure}

Observe that the diagram in Figure~\ref{fig:unknot} represents an unknot. To see this, note that this is a singular fiber of a fibration $S^3\to S^2$ given
by the restriction of the map $\pi\colon\mathbb{C}^2\sm\{0\}\to\C P^1$, $(z,w)\to [z^p\colon w^q]$. However $S(\Gamma)\neq 0$. In fact, $\Sl=0$,
$\Sn=q$, $\Sv=-p/q-pq$, $\Se=0$ and $\Sa=\frac{p}{q}$. In particular, Theorem~\ref{thm:main} does not hold for this kind of a diagram. As it will be clear
from the proof, the main reason is that there is a node such that the nearest weight is $1$. Therefore we shall need the following definition.

\goodbreak
\begin{definition}\
\begin{itemize}
\item[(a)]
Suppose $v$ is a leaf of a graph $\Gamma$ such that its nearest weight is $1$. Let $w$ be the nearest node and $\Gamma_w$ be the splice component of $\Gamma$
containing $w$. We shall call $v$ a \emph{bad leaf} if $\Gamma_w$ is not a graph $\Gamma(1,b)$ in Figure~\ref{fig:ab} (that is a graph $\Gamma(a,b)$ with $a=1$).
\item[(b)] A graph $\Gamma$ is called \emph{almost minimal} if it does not have any bad leaves.
\end{itemize}
\end{definition}

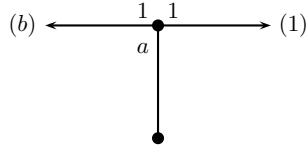
\begin{figure}
\begin{pspicture}(1,-2)(6,1)
\psline{<->}(2,0)(5,0)
\psline(3.5,0)(3.5,-1.5)
\pscircle[fillstyle=solid,fillcolor=black](3.5,0){0.08}
\pscircle[fillstyle=solid,fillcolor=black](3.5,-1.5){0.08}
\rput(3.3,0.2){\psscalebox{0.8}{$1$}}
\rput(3.7,0.2){\psscalebox{0.8}{$1$}}
\rput(3.3,-0.3){\psscalebox{0.8}{$a$}}
\rput(5.3,0){\psscalebox{0.8}{$(1)$}}
\rput(1.7,0){\psscalebox{0.8}{$(b)$}}
\end{pspicture}
\caption{A graph $\Gamma(a,b)$.}\label{fig:ab}
\end{figure}

Notice that by \cite[Theorem 8.1]{EN}, we can represent each graph link $L$ by an almost minimal diagram. In fact, using Move~3 from that result we can absorb
all the bad leaves.

\begin{lemma}\label{lem:stabil}
Suppose $\Gamma$ is a graph and $a$ is an arrowhead of multiplicity $1$. Let $\Gamma'$ be a graph resulting from $\Gamma$ by splicing this vertex with $\Gamma(1,b)$
for suitably defined $b$. Then $S(\Gamma)=S(\Gamma')$.
\end{lemma}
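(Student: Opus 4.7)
The plan is to compare $S(\Gamma)$ and $S(\Gamma')$ term by term in the decomposition $S=\Sl+\Sn+\Sv+\Se+\Sa$. Let $u=\an$ be the nearest node to $a$ in $\Gamma$ and $\beta=d_a^{\#}$ the nearest weight. Passing to $\Gamma'$, the arrowhead $a$ is replaced by an edge from $u$ (weight $\beta$ at $u$, weight $1$ at the other end) to a new node $w$, from which two further edges of weight $1$ at $w$ go to a new leaf $\ell$ and to a new arrowhead $a'$ of multiplicity $1$. Consequently $d_w=1$, $d_\ell=1$, and the multiplicity of $u$ is unchanged under splicing. The key identity, obtained by direct calculation inside $\Gamma(1,b)$ alone and then transported to $\Gamma'$ because splicing preserves multiplicities, is
\[
m_w=b\cdot 1+1\cdot 1=b+1.
\]
Moreover, for every old arrowhead $a''\neq a$ the path from $a''$ to $a'$ in $\Gamma'$ extends the one from $a''$ to $a$ in $\Gamma$ by the segment $u\to w\to a'$, and the only weight off this extension is the $1$ at $w$ toward $\ell$; hence $\lk_{\Gamma'}(a'',a')=\lk_\Gamma(a'',a)$.

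With these data I compute the increments $\Delta S_\bullet=S_\bullet(\Gamma')-S_\bullet(\Gamma)$. Linking numbers and multiplicities of the old arrowheads are preserved, so $\Delta\Sl=0$. The new node adds $d_w(\nu(w)-2)=1$ to $\Sn$, the new leaf adds $-d_\ell=-1$ to $\Sv$, and replacing the contribution $\beta/m_u$ of $a$ to $\Sa$ by the contribution $1/m_w$ of $a'$ gives $\Delta\Sa=1/m_w-\beta/m_u$. The only new edge joining two nodes is $uw$; cutting $\Gamma'$ along it recovers $\Gamma$ (with cut arrowhead multiplicity $\mu_u=1$) and $\Gamma(1,b)$ (with cut arrowhead multiplicity $\mu_w=b$), so $c=\gcd(1,b)=1$, and for $b>0$ the contribution of $uw$ to $\Se$ equals $\beta/m_u+1/(bm_w)-1/b$; every other edge between nodes keeps its former contribution. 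Adding the five increments, the $\pm\beta/m_u$ terms cancel and so do the $\pm 1$ coming from $\Sn$ and $\Sv$, leaving
\[
\Delta S=\frac{1}{m_w}+\frac{1}{bm_w}-\frac{1}{b}=\frac{b+1-m_w}{bm_w}=0
\]
by $m_w=b+1$. The main obstacle is isolating the clean identity $m_w=b+1$, which encodes what ``suitably defined $b$'' really means (the value forced by splice preservation of multiplicities) and is exactly what powers the cancellation; once it is in place the rest is bookkeeping, with the degenerate $\Se$ branch that arises when $b=0$ handled by the analogous computation using $1/d_u-d_w/d_{we}^2$ in place of the generic term.
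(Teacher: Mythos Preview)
Your proof is correct and follows exactly the same approach as the paper's: both compare $S(\Gamma')$ with $S(\Gamma)$ term by term, obtaining $\Delta\Sl=0$, $\Delta\Sn=1$, $\Delta\Sv=-1$, $\Delta\Sa=\tfrac{1}{b+1}-\tfrac{\beta}{m_u}$, and $\Delta\Se=\tfrac{\beta}{m_u}+\tfrac{1}{b(b+1)}-\tfrac{1}{b}$, and then observe that the total vanishes because $m_w=b+1$. Your write-up is in fact a bit more careful than the paper's (you justify $\Delta\Sl=0$ explicitly and note the degenerate $b=0$ branch of $\Se$), but note that by Definition~\ref{def:elem} the parameter $b$ in $\Gamma(1,b)$ is a positive integer, so that branch never actually occurs here.
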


\begin{proof}
The proof of this result can be regarded as a warm-up before similar arguments in Sections~\ref{sec:inductstep} and~\ref{sec:inductstep2}.
We compare the values of $\Sl$, $\Sn$, $\Sv$, $\Se$ and $\Sa$ for the two graphs. The quantities $\Sl$ agree, $\Sn(\Gamma')-\Sn(\Gamma)=1$,
while $\Sv(\Gamma')-\Sv(\Gamma)=-1$. Let $d=d_a^{\#}$ be the nearest weight to the arrowhead $a$ in $\Gamma$ and $m$ the multiplicity of the
nearest node. We have $\Sa(\Gamma')-\Sa(\Gamma)=\frac{1}{b+1}-\frac{d}{m}$.
Furthermore $\Gamma'$ has one more edge connecting two nodes. Therefore $\Se(\Gamma')-\Se(\Gamma)=\frac{d}{m}+\frac{1}{b(b+1)}-\frac{1}{b}$
(notice that in the notation of Definition~\ref{def:SGamma} we have $\mu_v=1$, $\mu_w=b+1$ and $c=1$, if $v$ is the nearest node to $a$).

Summing up all the contributions we see that $S(\Gamma)=S(\Gamma')$.
\end{proof}

\section{Signatures of a graph link}
\subsection{Dedekind sums}
We begin with recalling the definition of the \emph{sawtooth function}:
\[
\sawtooth{x}=\begin{cases}
\{x\}-\frac12&x\not\in\Z\\
0&x\in\Z,
\end{cases}
\]
where $\{x\}$ denotes the fractional part. Given the above notation we introduce the Dedekind sums.

\begin{definition}
For two numbers $p,q$ such that $q>0$ we define the \emph{Dedekind sum} as
\[\s(p,q)=\sum_{j=0}^{q-1}\sawfrac{j}{q}\sawfrac{pj}{q}.\]
\end{definition}

We have several well known facts.
\begin{equation}\label{eq:cancel}
\begin{split}
\s(ap,aq)&=s(p,q)\textrm{\ \ \ for any $a\in\Z_{>0}$}\\
\s(p',q)&=s(p,q)\textrm{\ \ \ if $pp'=1\bmod q$}\\
\s(-p,q)&=-s(p,q)\\
\s(p+aq,q)&=s(p,q)\textrm{\ \ \ for any $a\in\Z$.}
\end{split}
\end{equation}

The most important relation we shall use is the Reciprocity Law. We refer to \cite{RG} for an excellent survey.

\begin{proposition}[Reciprocity Law]\label{prop:prp}
If $p,q$ are coprime, then
\begin{equation}\label{eq:dedrecip}
\s(p,q)+\s(q,p)=\frac{1}{12}\left(\frac{p}{q}+\frac{q}{p}+\frac{1}{pq}-3\right).
\end{equation}
\end{proposition}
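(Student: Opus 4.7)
This is a classical result; I would sketch Rademacher's elegant contour-integration proof, one of several approaches surveyed in \cite{RG}. The first step is to rewrite the Dedekind sum in cotangent form, namely
\[\s(p,q) = \frac{1}{4q}\sum_{j=1}^{q-1}\cot\frac{\pi j}{q}\,\cot\frac{\pi p j}{q},\]
which follows from the Fourier series $\sawtooth{x} = -\frac{1}{\pi}\sum_{n\geq 1}\frac{\sin 2\pi n x}{n}$ and a standard evaluation of the resulting inner sum over $q$th roots of unity.

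The main object is the meromorphic function
\[F(z) = \cot(\pi z)\,\cot(\pi p z)\,\cot(\pi q z).\]
Since each factor has period $1/a$ with $a \in \{1,p,q\}$, the function $F$ is $1$-periodic. I would integrate $F$ along the positively oriented boundary of the rectangle $[\delta, 1+\delta] \times [-R, R]$ with small $\delta > 0$ and large $R$. Periodicity causes the two vertical sides to cancel; on the horizontal sides, $\cot(\pi a z) \to \mp i$ as $\im(z) \to \pm\infty$, so $F(z) \to i$ in the upper half-plane and $-i$ in the lower, yielding a total horizontal contribution of $-2i$ as $R \to \infty$. By the residue theorem, the sum of residues of $F$ inside the rectangle equals $-1/\pi$.

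The interior poles are of two kinds. Coprimality of $p$ and $q$ ensures that the simple poles $z = j/q$ ($j = 1, \dots, q-1$) and $z = j/p$ ($j = 1, \dots, p-1$) are distinct, with residues $\frac{1}{\pi q}\cot(\pi j/q)\cot(\pi p j/q)$ and $\frac{1}{\pi p}\cot(\pi j/p)\cot(\pi q j/p)$ respectively; by the cotangent representation above these sum to $\frac{4\s(p,q) + 4\s(q,p)}{\pi}$. The remaining interior pole is the triple pole at $z = 1$, whose residue equals that at $z = 0$ by periodicity; expanding $\cot(\pi a z) = \frac{1}{\pi a z} - \frac{\pi a z}{3} + O(z^3)$ and multiplying out gives this residue as $-\frac{1}{3\pi}\bigl(\frac{p}{q} + \frac{q}{p} + \frac{1}{p q}\bigr)$. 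Equating the total with $-1/\pi$ and rearranging yields \eqref{eq:dedrecip}.

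The main care lies in choosing the contour so that exactly one triple pole and $p + q - 2$ distinct simple poles lie inside, and in tracking signs and orientations when evaluating the horizontal integrals; the cotangent Laurent expansions must be carried to the first correction term in order to extract the triple-pole residue correctly.
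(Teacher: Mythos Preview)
Your sketch is correct and follows the classical Rademacher contour-integration argument. Note, however, that the paper does not give its own proof of this proposition: it is stated as a known result with a reference to \cite{RG}, so there is no ``paper's proof'' to compare against. Your approach is precisely one of the standard proofs surveyed in that reference, and the details you outline (the cotangent representation of $\s(p,q)$, the choice of $F(z)=\cot(\pi z)\cot(\pi pz)\cot(\pi qz)$, the cancellation of vertical sides by periodicity, the asymptotics $F\to\pm i$ on the horizontal sides giving total $-2i$, and the Laurent expansion yielding the triple-pole residue $-\frac{1}{3\pi}(p/q+q/p+1/pq)$) are all accurate. The one point that deserves a word of justification in a fully written-out version is the passage from the sawtooth definition of $\s(p,q)$ to the cotangent form, but you flag this and it is indeed routine.
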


Motivated by the above result we define

\begin{equation}\label{eq:Rs}
\Rs(p,q)=\frac{p}{q}+\frac{q}{p}+\frac{\gcd(p,q)^2}{pq}-3.
\end{equation}
By the first equation of \eqref{eq:cancel}, for all $p,q>0$, not necessarily coprime, \eqref{eq:dedrecip} translates into
\begin{equation}\label{eq:sandR}
\s(p,q)+\s(q,p)=\Rs(p,q)/12.
\end{equation}

We will also use the following generalization of \eqref{eq:dedrecip}.
\begin{proposition}[\expandafter{\cite[Theorem 7]{Pom}}]\label{prop:twotermlaw}
If $p,q,u,v$ are positive integers such that $\gcd(p,q)=\gcd(u,v)=1$ and $p',q'$ are such that $pp'+qq'=1$, then
\[
\s(p,q)+\s(u,v)=\s(p'u-q'v,t)-\frac{1}{4}+\frac{1}{12}\left(\frac{q}{vt}+\frac{v}{qt}+\frac{t}{qv}\right),\]
where $t=pv+qu$.
\end{proposition}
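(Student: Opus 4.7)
The plan is to reduce the identity to the classical reciprocity law \eqref{eq:dedrecip} by three applications, keyed to a pair of modular identities coming from the assumption $pp' + qq' = 1$.

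Set $r := p'u - q'v$. I would first establish the arithmetic identities
\[pr + q't = u, \qquad qr - p't = -v,\]
by direct substitution of $pp' = 1 - qq'$. Any prime dividing both $r$ and $t$ would then divide the right-hand sides $u$ and $v$, contradicting $\gcd(u,v)=1$; hence $\gcd(r,t) = 1$ and $\s(r,t)$ is a classical Dedekind sum in the sense of Proposition~\ref{prop:prp}. As a bonus, reducing the same identities modulo $t$ yields
\[pr \equiv u \pmod{t}, \qquad qr \equiv -v \pmod{t},\]
which will drive the rest of the proof.

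Next I would apply Proposition~\ref{prop:prp} to each of the three coprime pairs $(p,q)$, $(u,v)$, $(r,t)$, and combine the three instances of \eqref{eq:dedrecip} so as to convert the target identity into an equivalent combinatorial statement of the shape
\[\s(t,r) \;=\; \s(q,p) + \s(v,u) + E,\]
where $E$ is an explicit rational expression in $p,q,u,v,t,r$. Verifying that $E$ matches the claimed $-\tfrac14 + \tfrac{1}{12}\bigl(q/(vt) + v/(qt) + t/(qv)\bigr)$ after undoing the three reciprocities is a mechanical manipulation of the pieces $r/t+t/r+1/(rt)$, $p/q+q/p+1/(pq)$, and $u/v+v/u+1/(uv)$.

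The main obstacle is this combinatorial identity. My approach is to expand $\s(t,r)$ from the definition and partition the index range $\{0,1,\dots,r-1\}$ according to residues modulo $q$ and modulo $v$; the congruences $pr \equiv u$ and $qr \equiv -v$ modulo $t$ let one identify, on each class, a subsum that reproduces $\s(q,p)$ or $\s(v,u)$ after applying the symmetries $\s(p',q) = \s(p,q)$ (for $pp' \equiv 1 \bmod q$) and $\s(-p,q) = -\s(p,q)$ from \eqref{eq:cancel}. The remaining boundary contributions are values of the sawtooth at rational points and, using $\sum_{j=0}^{N-1}\sawtooth{j/N}=0$ together with the evaluation of $\sum \sawtooth{j/N}\sawtooth{aj/N}$ on complete residue systems, collapse to the claimed rational expression $E$. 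I expect the indexing and bookkeeping in this last step to be the most delicate part of the argument; once the decomposition is set up correctly the remaining arithmetic is routine.
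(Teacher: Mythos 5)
The paper offers no proof of this proposition at all: it is quoted verbatim from Pommersheim \cite[Theorem 7]{Pom} (where it is derived from the Todd class of a toric surface; Rademacher's original route is via the transformation law of $\log\eta$, and there are also proofs by cotangent sums and contour integration). So there is nothing in the paper to compare your route against, and your attempt has to stand on its own. Its preliminary steps are fine: with $r=p'u-q'v$ and $t=pv+qu$ one indeed checks $pr+q't=u$ and $qr-p't=-v$, whence $\gcd(r,t)=1$ and $pr\equiv u$, $qr\equiv -v \pmod t$.

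The gap is in the step you yourself flag as the ``main obstacle''. Applying the two-term reciprocity \eqref{eq:dedrecip} to $(p,q)$, $(u,v)$ and $(r,t)$ does not reduce the problem; it merely rewrites one three-term relation as another, equivalent, three-term relation $\s(t,r)=\s(q,p)+\s(v,u)+E$, which is exactly as hard as the original (the three-term law is the cocycle property of the Rademacher function and is genuinely independent of the two-term law). Your proposed attack on it does not work as described: $\s(t,r)$ is a sum over $j\in\{0,\dots,r-1\}$, where the congruences $pr\equiv u$, $qr\equiv -v$ (which hold modulo $t$, not modulo $r$) give no information; $q$ and $v$ need not divide $r$, need not be coprime to $r$ or to each other, so ``partitioning $\{0,\dots,r-1\}$ by residues modulo $q$ and modulo $v$'' is not a decomposition into complete residue systems and there is no visible bijection onto the index sets $\{0,\dots,p-1\}$ and $\{0,\dots,u-1\}$ of $\s(q,p)$ and $\s(v,u)$. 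Note also that $r$ itself is only well defined modulo $t$ (replacing $p'\mapsto p'+kq$, $q'\mapsto q'-kp$ shifts $r$ by $kt$ and can make it negative), so any argument whose index set is literally $\{0,\dots,r-1\}$ is choice-dependent. Finally, the constant $-\tfrac14$, which in every known proof arises from a sign count (the $\mathrm{sign}(c_1c_2c_3)$ term in the Rademacher cocycle), is nowhere accounted for. To repair the argument you would need an actual new ingredient --- e.g.\ the cotangent representation $\s(p,q)=\frac1{4q}\sum_{j=1}^{q-1}\cot\frac{\pi j}{q}\cot\frac{\pi pj}{q}$ together with a residue computation, or an induction on $q+v$ via the matrix identity behind $t=pv+qu$ --- rather than bookkeeping on top of the two-term law.
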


\subsection{Formulae for signatures involving Dedekind sums}\label{sec:algor}

Let $\Gamma$ be a splice graph and $L=L_\Gamma$ the corresponding graph (multi)link.
In this subsection we shall show how to compute the average signature of $L$ from the graph $\Gamma$. We write $\os(\Gamma)$ for $\os(L_\Gamma)$.

For $\lambda\in S^1$, let $\sigma_\lambda^-$ denote the equivariant signature of $L$. If $\lambda$ is of type $e^{2\pi ip/q}$, then
$\sigma_{\lambda}^-$ can be defined as $\sigma_\lambda(N_q,\tau^p)$ in \cite{Li} (where $N_q$ is as in Section~\ref{ss:motivation} and  $\tau$ is the deck
transformation), see also \cite[Section 12]{Gor}. For general $\lambda$ we refer to \cite[Appendix]{Neu0} or \cite{Mat}, but that definition is more
algebraic than topological.

We shall not need a definition of $\sigma_\lambda^-$. We will use only Lemma~\ref{lem:spladd} below, the fact that $\sigma_\lambda^-$ is zero except for
finitely many values of $\lambda$ (this follows from the algebraic description by \cite{Mat,Neu0}). For $\lambda\neq 1$, the signature
$\sigma_\lambda^-$ is equal to half the jump of the Tristram--Levine signature at $\lambda$, see \cite{Mat}. In particular,
if the Alexander polynomial $\Delta(t)$
of the link is not identically
zero, $\sigma_{\lambda}^-=0$ unless $\Delta(\lambda)=0$. For graph links, by the discussion of \cite[Chapter 11]{EN}, this implies that $\sigma_{\lambda}^-$ can
be non zero only for $\lambda$ of the form $e^{2\pi ip/q}$ with $p,q$ integers.

\begin{lemma}[see \expandafter{\cite[Theorem 5.3]{Neu2}}]\label{lem:spladd}\
\begin{itemize}
\item[(a)] The signatures $\sigma^-_\lambda$ are additive under splicing.
\item[(b)] For a splice component as in Figure~\ref{fig:splicecomp}
let us define
 $m_j=0$ for $j=k+1,\dots,n$. Let $\beta_j$ be chosen so that
\[\beta_j\alpha_1\ldots\widehat{\alpha_j}\ldots\alpha_n\equiv 1\ (\bmod\,\alpha_j).\]
Such $\beta_j$ exist because $\alpha_1,\ldots,\alpha_n$ are pairwise coprime. The multiplicity of the central vertex is equal to
\[m=\sum_{j=1}^n\alpha_1\ldots\widehat{\alpha_j}\ldots\alpha_nm_j.\]
Finally, let $s_j=(m_j-\beta_jm)/\alpha_j$ (it is easy to see that $s_j\in\mathbb{Z}$). If $\lambda=e^{2\pi ip/q}$ with
$p,q$ coprime, then
\begin{equation}\label{eq:spladd}
\sigma_\lambda^-=
\begin{cases}
0&\text{ if $q$ does not divide $m$}\\
2\sum_{j=1}^n\sawtooth{\frac{s_jp}{q}}&\text{ if $q$ divides $m$.}
\end{cases}
\end{equation}
\end{itemize}
\end{lemma}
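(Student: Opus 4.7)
The plan is to treat the two parts separately. Part (a) will be a Seifert-matrix argument, while part (b) is essentially Neumann's $G$-signature computation for Seifert-fibered multilinks.

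For part (a), since (for $\lambda\neq 1$) $\sigma^-_\lambda$ is up to a factor of $2$ the jump of the ordinary Tristram--Levine signature $\sigma_L$, it suffices to prove splice additivity of $\sigma_L(\lambda)$ for every $\lambda\in S^1$. I would build a Seifert surface for the spliced multilink by combining Seifert surfaces $F\subset M\setminus N(L_1)$ and $F'\subset M'\setminus N(L'_1)$ for the two pieces and gluing them along an annulus in the splice torus. Because the gluing $f$ swaps meridian and longitude, $\partial F$ and $\partial F'$ match up, and the resulting surface $F''$ has $H_1(F'')\cong H_1(F)\oplus H_1(F')$. Curves on the two halves are separated by the splice torus, so all mixed linking numbers vanish and the Seifert form is block-diagonal. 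This forces $\sigma_L(\lambda)$, and hence $\sigma^-_\lambda$, to be additive.

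For part (b), a splice component corresponds to a multilink in $\Sigma(\alpha_1,\ldots,\alpha_n)$ whose components are fibers of the Seifert fibration with multiplicities $m_1,\ldots,m_k$, extended by $m_j=0$ for $j>k$. Writing $\lambda=e^{2\pi ip/q}$, the character $\chi\colon\pi_1(\Sigma\setminus L)\to S^1$ associated to $\lambda$ sends the meridian of the $j$-th component to $\lambda^{m_j}$, and $\sigma^-_\lambda$ is the twisted signature of $\chi$. First I would observe that $\chi$ extends across $\Sigma$ iff it is trivial on the generic fiber; the generic fiber is $\sum_j \alpha_1\cdots\widehat{\alpha_j}\cdots\alpha_n m_j=m$ times the meridional generator, so extension holds iff $q\mid m$. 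When $q\nmid m$ no compatible branched cover exists and $\sigma^-_\lambda=0$. When $q\mid m$, the $q$-fold cyclic branched cover of $\Sigma$ is again Seifert-fibered and bounds a Seifert disk bundle $W$ over the base orbifold, on which $\mathbb{Z}/q$ acts with isolated fixed points lying over the orbifold points of orders $\alpha_j$. The Atiyah--Singer $G$-signature formula then expresses $\sigma^-_\lambda$ as a sum of local cotangent contributions
\[
-\cot\!\Bigl(\tfrac{\pi p}{q}\Bigr)\cot\!\Bigl(\tfrac{\pi p\beta_j}{q}\Bigr),
\]
one for each $j=1,\ldots,n$, with $\beta_j$ dictated by the local Seifert invariants. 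Applying the classical identity that converts such a cotangent product into $2\sawtooth{s_jp/q}$, with $s_j=(m_j-\beta_jm)/\alpha_j$, and telescoping the $j$-independent factor, one obtains $2\sum_j\sawtooth{s_jp/q}$. Integrality of $s_j$ follows from $\beta_j\alpha_1\cdots\widehat{\alpha_j}\cdots\alpha_n\equiv 1\pmod{\alpha_j}$ together with $q\mid m$.

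The main obstacle will be part (b): pinning down that the $\beta_j$ produced by the Seifert invariants of the branched cover are exactly those satisfying the stated congruence, and verifying that the $j$-independent cotangent factor cancels globally after summing (this is what forces the formula to simplify to the sawtooth expression). This careful bookkeeping — verification of orientations, identification of local rotation numbers, and cancellation of global constants — is the substantive content, and is carried out in Neumann \cite[\S5]{Neu2}; part (a) is comparatively formal.
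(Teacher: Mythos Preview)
The paper does not prove this lemma; it is quoted from Neumann \cite[Theorem~5.3]{Neu2} and used as a black box, so there is no argument in the paper to compare against. That said, your sketch for part~(a) contains a genuine gap.

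Your reduction in (a) points the wrong way, and the target statement is actually false. You propose to prove that $\sigma_L(\lambda)$ itself is splice additive (via a block-diagonal Seifert form), from which additivity of the jumps $\sigma^-_\lambda$ would follow. But $\sigma_L(\lambda)$ is \emph{not} splice additive: combining \eqref{eq:sig-equiv} with the arrowhead count before and after splicing shows that when both spliced arrowheads have nonzero multiplicity one has $\sigma_\Gamma(\lambda)=\sigma_{\Gamma_1}(\lambda)+\sigma_{\Gamma_2}(\lambda)+1$ for generic $\lambda$; this is precisely the correction $\eta=1$ in Lemma~\ref{lem:spladds}. Hence the Seifert form cannot be block diagonal. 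Concretely, your claim $H_1(F'')\cong H_1(F)\oplus H_1(F')$ fails: on the splice torus $T$, the curves $\partial F\cap T$ and $\partial F'\cap T$ represent a multiple of the longitude and a multiple of the meridian respectively (after the swap $f$), so they do not bound a common annulus; matching them up requires additional pieces of surface in $T$, which contribute extra summands to $H_1(F'')$. Additivity of the \emph{jumps} is strictly weaker than additivity of $\sigma_L$, and Neumann's route is via Novikov/Wall additivity of $G$-signatures for the bounding $4$-manifolds rather than a direct Seifert-matrix decomposition.

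For (b) the outline is in the right spirit, but the step ``the classical identity that converts such a cotangent product into $2\sawtooth{s_jp/q}$'' is not an identity: a single term $-\cot(\pi p/q)\cot(\pi p\beta_j/q)$ is not equal to a sawtooth value. In Neumann's computation the sawtooth appears only after one passes to the \emph{jump} (a difference of signatures at adjacent characters), which is what telescopes the cotangent contributions into sawtooth form; the bookkeeping you defer to \cite{Neu2} is the substantive content. You also do not address irrational $\lambda$, where no branched cover is available and one must use the algebraic definition of $\sigma^-_\lambda$ from \cite{Mat,Neu0}; for graph links all nonzero jumps occur at roots of unity, as the paper remarks, so in the present application this last gap is harmless.
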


We have a well-known lemma, for convenience of the reader we present a sketch of proof.
\begin{lemma}\label{lem:sig-equiv}
Let $L$ be a graph link and $\Gamma$ the underlying graph.
If $\zeta=e^{2\pi ix}$ is not a root of the Alexander polynomial of $L$ then the Tristram--Levine signature of $L$ is
related to the equivariant signatures by the following equation.
\begin{equation}\label{eq:sig-equiv}
\sigma(\zeta)=-\sum_{y>x}\sigma_{e^{2\pi i y}}^-+\sum_{y<x}\sigma_{e^{2\pi iy}}^-+1-\#\Gamma,
\end{equation}
where $\#\Gamma$ is the number of number of arrowheads of the graph (that is the number of components of $L$).
\end{lemma}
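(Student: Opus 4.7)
The strategy is to view both sides of \eqref{eq:sig-equiv} as step functions of $x\in(0,1)$ and compare their jumps. Away from the finitely many $y$ for which $e^{2\pi iy}$ is a root of $\Delta_L$, both sides are locally constant, because $\sigma^-_\lambda$ vanishes outside this finite set. The proof reduces to (i) verifying that the jumps match at every root of $\Delta_L$ in $(0,1)$, and (ii) pinning down the remaining constant by evaluating at one boundary point.

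For (i), the cited fact from \cite{Mat} that $\sigma^-_\lambda$ equals half the jump of the Tristram--Levine signature at $\lambda\neq 1$ gives the jump of the LHS at $y_0\in(0,1)$ as $2\sigma^-_{e^{2\pi iy_0}}$. The RHS has the same jump by construction: as $x$ crosses $y_0$ upward, the term $\sigma^-_{e^{2\pi iy_0}}$ migrates from the collection $-\sum_{y>x}$ into $+\sum_{y<x}$, producing a net change of $+2\sigma^-_{e^{2\pi iy_0}}$. Hence $\sigma_L(e^{2\pi ix})-F(x)$, where $F(x)$ denotes the RHS of \eqref{eq:sig-equiv}, is globally constant on $(0,1)$.

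For (ii), I take $x\to 0^+$. On the RHS, since $A(t)^T=A(\bar t)$ the Tristram--Levine signature is symmetric under conjugation, $\sigma_L(t)=\sigma_L(\bar t)$, and hence $\sigma^-_{\bar\lambda}=-\sigma^-_\lambda$. Pairing $y$ with $1-y$ (with $y=1/2$ forcing $\sigma^-_{-1}=0$) shows $\sum_{y\in(0,1)}\sigma^-_{e^{2\pi iy}}=0$, so $F(0^+)=1-\#\Gamma$. On the LHS, I would decompose $A(t)=a(S+S^T)+ib(S^T-S)$ with $a=1-\cos 2\pi x\sim 2\pi^2 x^2$ and $b=\sin 2\pi x\sim 2\pi x$. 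The dominant Hermitian form $ib(S^T-S)$ has signature zero (its eigenvalues come in $\pm$ pairs since $S^T-S$ is real skew-symmetric), and its kernel is the radical of the intersection form on the Seifert surface, of dimension $\#\Gamma-1$. On this kernel the subleading $a(S+S^T)$ controls the signature for small $x>0$, and identifying the restriction of the Seifert form to the $(\#\Gamma-1)$-dimensional span of the boundary components in terms of pairwise linking numbers of the $L_i$ yields the desired limit $1-\#\Gamma$.

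The main obstacle is the last asymptotic calculation at $t=1$: producing the $(\#\Gamma-1)$-dimensional kernel explicitly as the image of $H_1(\partial F)\to H_1(F)$ and then evaluating the Seifert form on it to get signature $1-\#\Gamma$ (this step is where the sign convention for $S$ needs to be tracked carefully). Once this is pinned down, steps (i) and (ii) combine immediately to give \eqref{eq:sig-equiv}.
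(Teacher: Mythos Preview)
Your proposal is correct and follows the same overall strategy as the paper: match the jumps via the equivariant signatures, then pin down the constant by computing $\lim_{\zeta\to 1}\sigma_L(\zeta)=1-\#\Gamma$. The difference lies in how this limit is established. The paper uses that graph links with positive weights are fibered (so $S$ is invertible), Jordan-decomposes the monodromy $S^{-1}S^T$ into blocks $U_{\neq 1}\oplus U_{=1}$, invokes \cite[Corollary~11.5]{EN} to see that the monodromy is the identity on $U_{=1}$, and cites \cite{BoNe11} to conclude that the signature near $\zeta=1$ is carried by $S_{=1}$ alone, which it then identifies with the linking matrix. Your perturbation argument bypasses the fibered hypothesis and the monodromy analysis entirely: writing $A(t)=b\bigl[i(S^T-S)+(a/b)(S+S^T)\bigr]$ with $a/b\to 0^+$ reduces directly to the signature of $S+S^T$ restricted to the radical of the intersection form. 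Both routes land on the same endpoint---showing that this restricted form (the linking matrix on $L_1,\dots,L_{n-1}$) is negative definite---which you correctly flag as the remaining obstacle; the paper does not prove this either but cites \cite{Neu1}. For graph links it follows from strict diagonal dominance: the diagonal entries are $-\sum_{j\neq i}\lk(L_i,L_j)$ by the relation $\sum L_j=0$, the off-diagonal entries are $\lk(L_i,L_j)>0$, and each row sum is $-\lk(L_i,L_n)<0$. Your route is more elementary and self-contained; the paper's makes the monodromy structure explicit, which is natural in the graph-link setting.
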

\begin{remark}
The result has in fact two parts. First is that
\begin{equation}\label{eq:lim1}
\sigma(\zeta)=-\sum_{y>x}\sigma_{e^{2\pi i y}}^-+\sum_{y<x}\sigma_{e^{2\pi iy}}^-+\lim_{\zeta\to 1}\sigma(\zeta).
\end{equation}
This is valid for all links, not only for the graph links. The other part, that $\lim_{\zeta\to 1}\sigma(\zeta)=1-\#\Gamma$ uses the fact
that the link is a graph link.
\end{remark}
\begin{proof}
The jumps of the Tristram Levine signatures are given by the equivariant signatures, compare \cite{Mat}.
This proves \eqref{eq:lim1}.
It remains to show that $\lim_{\zeta\to 1}\sigma(\zeta)=1-\#\Gamma$.

This appears to be a folklore result. Since we did not find a good reference in the literature, we present
a sketch of a proof, referring to \cite{EN,Neu1,Nem95,BoNe11} for some auxiliary results.

The restriction that the weights are positive implies by \cite[Theorem 11.2]{EN} that $L$ is fibered. Let $\Sigma$ be a fiber
and let $S$ be the corresponding Seifert matrix. It is non--degenerate.
A Jordan block decomposition of $S^{-1}S^T$ gives a decomposition $H_1(\Sigma;\R)=U_{\neq 1}\oplus U_{=1}$ such that $S$ has a block structure
with respect to this decomposition, $S=S_{\neq 1}\oplus S_{=1}$ and $S_{\neq 1}^{-1}S_{\neq 1}^T$ has eigenvalues different than $1$ and $S_{=1}^{-1}S_{=1}^T$
has eigenvalues equal to $1$. By \cite[Corollary 11.5]{EN} $S_{=1}^{-1}S_{=1}^T$ is actually the identity matrix.

The proof splits now into two parts. The first part is that for $\zeta$ sufficiently close to $1$ the signature $\sigma(\zeta)$ is equal to
the signature of the hermitian form $(1-\zeta)S_{=1}+(1-\ol{\zeta})S_{=1}^T$. This follows for example from \cite[Proposition 4.14]{BoNe11}
combined with the symmetry property of H-numbers \cite[Lemma 3.4(a)]{BoNe11}.

The second part is to show that the signature of the above hermitian form is $1-\#\Gamma$. Since $S-S^T=S(1-S^{-1}S^T)$, the intersection form
on $U_{\neq 1}$ is non--degenerate and on $U_{=1}$ it is zero. But this means that $U_{=1}$ is the image of $H_1(\p\Sigma)$ in $H_1(\Sigma)$,
in other words $U_{=1}$ is spanned by components $L_1,\ldots,L_n$ of $L$, subject to the relation $L_1+\ldots+L_n=0$.
It follows that $S_{=1}$ can be viewed as the linking matrix of $L_1,\ldots,L_{n-1}$, where we define $\lk(L_j,L_j)$ as $\lk(L_j,L_1+\ldots+L_{j-1}+L_{j+1}+L_n)$,
compare \cite[page 321]{Neu1}.
In particular $S_{=1}$ is symmetric and again by \cite[page 321]{Neu1} (see also \cite[Section 3.7]{BoNe12}) $S_{=1}$ is negative definite.
Thus the signature of $(1-\zeta)S_{=1}+(1-\ol{\zeta})S_{=1}^T$ is equal to $-(\#\Gamma-1)$ as desired.
\end{proof}

\begin{remark}\label{rem:sum}
If $L$ is a graph multilink such that each arrowhead has non-negative multiplicities, then \eqref{eq:sig-equiv} still holds
with the exception that $\#\Gamma$ should be understood as the number of arrowhead vertices \emph{with non-zero multiplicities}.
The argument is slightly more complicated since the Seifert matrix in general has to be decomposed into $S_0\oplus S_{\neq 1}\oplus S_{=1}$,
where $S_0$ is a zero matrix and $S_{\neq 1}$, $S_{=1}$ are as above. An argument very similar to the one in \cite[Section 3]{BoNe12} shows
that if the multilink has components $L_1,\ldots,L_n$ with multiplicities $m_1,\ldots,m_n$ and for some $k$, $m_j\ge 1$ if $j\le k$ and $m_j=0$ if $j>k$,
then $S_0$ has size $(m_1-1)+\ldots+(m_k-1)$ and $S_{=1}$ has size $k-1$ and is symmetric negative definite.
\end{remark}

It follows from Lemma~\ref{lem:sig-equiv} and Lemma~\ref{lem:spladd} that for a splice component as above, the Tristram--Levine
signature is given by the following formula:
\begin{equation}\label{eq:sigform2}
\sigma(e^{2\pi ix})=1-\#\Gamma+\sum_{j=1}^n\sum_{i=1}^m\sawfrac{is_j}{m}\delta_i,
\end{equation}
where we use notation from Lemma~\ref{lem:spladd} and $\delta_i=+1$ if $x>i/m$ and $-1$ if $x<i/m$.
The formula \eqref{eq:sigform2} holds as long as $mx$ is not an integer.

Integrating \eqref{eq:sigform2} over the interval $[0,1]$ yields the following result
\begin{equation}\label{eq:integralsplice}
\os(\Gamma)=1-\#\Gamma-4\sum_{j=1}^n \s(s_j,m),
\end{equation}
which is essentially due to N\'emethi \cite[Corollary~4.2]{Nem}.

The equivariant signatures are splice invariant by Lemma~\ref{lem:spladd}(a). If we splice two graphs $\Gamma_1$ and $\Gamma_2$ to a graph $\Gamma$,
and either $m_1$ or $m_2$ is zero (notation as in Figure~\ref{fig:splice}), then $\#\Gamma=\#\Gamma_1+\#\Gamma_2$, compare Remark~\ref{rem:sum}.
If $m_1m_2\neq 0$, then $\#\Gamma=\#\Gamma_1+\#\Gamma_2-1$. This
gives the following
fact, which we state now for future reference.
\begin{lemma}[Splice additivity of signatures]\label{lem:spladds}
Let $\Gamma$ be a graph, let us cut it into two components $\Gamma_1$ and $\Gamma_2$. Then $\os(\Gamma)=\os(\Gamma_1)+\os(\Gamma_2)+\eta$,
where $\eta=1$ if both newly appeared arrowheads have non-zero multiplicities, otherwise $\eta=0$.
\end{lemma}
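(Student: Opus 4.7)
The plan is to piggy-back on the splice additivity of the equivariant signatures $\sigma^-_\lambda$ (Lemma~\ref{lem:spladd}(a)) and convert it into an almost-additivity of the Tristram--Levine signature via Lemma~\ref{lem:sig-equiv}, before finally integrating. Concretely, fix a generic $\zeta=e^{2\pi ix}$ (not a root of any of the Alexander polynomials involved). Lemma~\ref{lem:sig-equiv}, applied with the extension of Remark~\ref{rem:sum} so that $\#\Gamma$ counts only arrowheads of positive multiplicity, yields
\[
\sigma_\Gamma(\zeta)=-\sum_{y>x}\sigma^-_{\Gamma,e^{2\pi iy}}+\sum_{y<x}\sigma^-_{\Gamma,e^{2\pi iy}}+1-\#\Gamma,
\]
and the analogous identities for $\Gamma_1$ and $\Gamma_2$. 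Splitting each $\sigma^-_{\Gamma,\lambda}$ as $\sigma^-_{\Gamma_1,\lambda}+\sigma^-_{\Gamma_2,\lambda}$ by Lemma~\ref{lem:spladd}(a) and rearranging gives the pointwise identity
\[
\sigma_\Gamma(\zeta)=\sigma_{\Gamma_1}(\zeta)+\sigma_{\Gamma_2}(\zeta)+\bigl(\#\Gamma_1+\#\Gamma_2-\#\Gamma-1\bigr).
\]

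Next I would integrate this identity in $x$ over $[0,1]$. The exceptional points where $\sigma_\Gamma$, $\sigma_{\Gamma_1}$, or $\sigma_{\Gamma_2}$ jumps form a finite set, so they contribute nothing to the integral, and one obtains
\[
\os(\Gamma)=\os(\Gamma_1)+\os(\Gamma_2)+\bigl(\#\Gamma_1+\#\Gamma_2-\#\Gamma-1\bigr).
\]
It remains to identify the constant with $\eta$.

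For the count, observe that when the edge is cut, $\Gamma_1$ and $\Gamma_2$ each acquire exactly one new arrowhead, of multiplicities $m_1$ and $m_2$ respectively, while the other arrowheads are partitioned unchanged between the two pieces. By Remark~\ref{rem:sum}, $\#\Gamma_i$ counts only arrowheads with non-zero multiplicities, hence
\[
\#\Gamma_1+\#\Gamma_2=\#\Gamma+\mathbf{1}_{\{m_1\neq 0\}}+\mathbf{1}_{\{m_2\neq 0\}}.
\]
Thus the constant equals $1$ when both $m_1,m_2\neq 0$ and $0$ when exactly one is zero, which is precisely the value of $\eta$. The only real obstacle in this proof is the careful bookkeeping of arrowheads with zero multiplicities, but once Remark~\ref{rem:sum} is in hand this reduces to the straightforward counting above.
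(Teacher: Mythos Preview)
Your argument is correct and follows essentially the same route as the paper: the paper's proof (given in the paragraph immediately preceding the lemma) also combines the splice additivity of the equivariant signatures from Lemma~\ref{lem:spladd}(a) with the arrowhead count from Remark~\ref{rem:sum}, obtaining $\#\Gamma=\#\Gamma_1+\#\Gamma_2-2$ when $m_1m_2\neq 0$ and $\#\Gamma=\#\Gamma_1+\#\Gamma_2-1$ otherwise, which yields exactly your constant $\#\Gamma_1+\#\Gamma_2-\#\Gamma-1=\eta$. One small point you leave implicit: your formula would give $\eta=-1$ if both $m_1=m_2=0$, but this case cannot occur since then $\Gamma$ would have no arrowheads at all (positive weights force $m_i=0$ exactly when the opposite piece contains no arrowheads of $\Gamma$); the paper makes this explicit later in Section~\ref{sec:inductstep2}.
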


\subsection{The main result}
Now we can state the main result of this article.

\begin{theorem}\label{thm:main}
If $L$ is a graph link in $S^3$ other than the unknot or the Hopf link.  Let $\Gamma$ be an almost minimal graph representing it, then
\[\os(L)=-\frac13 S(\Gamma).\]
\end{theorem}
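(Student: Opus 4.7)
I would prove Theorem~\ref{thm:main} by induction on the number of nodes of $\Gamma$, keeping the graph almost minimal throughout. Lemma~\ref{lem:stabil} guarantees that $S(\Gamma)$ is invariant under absorbing bad leaves (while $\os$ is a link invariant), so the hypothesis is preserved under the cuts that will appear in the induction.

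\textbf{Base case: $\Gamma$ has a single node.} Then $\Gamma$ is a splice component as in Figure~\ref{fig:splicecomp}, and formula \eqref{eq:integralsplice} gives $\os(\Gamma) = 1 - \#\Gamma - 4\sum_{j=1}^n s(s_j, m)$. Under Assumption~\ref{ass:ass2}, at most two weights $\alpha_j$ exceed $1$, so most of the Dedekind sums collapse trivially; the remaining one or two are reduced to rational expressions in $\alpha_j, m_j, m$ by a single application of either the Reciprocity Law (Proposition~\ref{prop:prp}) or its two-term generalisation (Proposition~\ref{prop:twotermlaw}). Independently $\Se(\Gamma) = 0$ since there is no inter-node edge, while $\Sl, \Sn, \Sv, \Sa$ are written out directly from Definition~\ref{def:SGamma}. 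A term-by-term comparison yields $\os(\Gamma) = -\tfrac{1}{3}S(\Gamma)$.

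\textbf{Inductive step.} Cut $\Gamma$ along an edge $e$ joining two nodes $v, w$ into $\Gamma_1 \ni v$ and $\Gamma_2 \ni w$, with new arrowheads of multiplicities $\mu_v, \mu_w$. Lemma~\ref{lem:spladds} gives $\os(\Gamma) = \os(\Gamma_1) + \os(\Gamma_2) + \eta$, where $\eta = 1$ if $\mu_v\mu_w \neq 0$ and $\eta = 0$ otherwise. Combined with the inductive hypothesis (with separate handling of the cases where some $\Gamma_i$ is an unknot or Hopf link), this reduces the theorem to the combinatorial identity
\[S(\Gamma_1) + S(\Gamma_2) = S(\Gamma) + 3\eta.\]
To verify it I would track the five pieces of $S$ separately. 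The terms $\Sn$ and $\Sv$ are unchanged, since every node and leaf keeps its local weight $d_v$ and valency. The term $\Se$ loses exactly the contribution of $e$, while $\Sa$ gains two new terms, namely $d_{ve}/m_v$ and $d_{we}/m_w$ in the generic case. Finally, $\Sl$ loses all crossing linking numbers $\sum_{a \in \A_1, a' \in \A_2}\lk_\Gamma(a, a')$ but gains new linking contributions from the fresh arrowheads on each side. The defining property of $\mu_v, \mu_w$—that all vertex multiplicities are preserved under the cut—together with the factorisation of a crossing linking number $\lk_\Gamma(a, a')$ through the new arrowheads allows every change to be rewritten in terms of $d_{ve}, d_{we}, m_v, m_w, \mu_v, \mu_w$ and $c = \gcd(\mu_v, \mu_w)$. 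Adding everything and substituting the formula for the $\Se$ contribution of $e$, the combination collapses to $3\eta$.

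\textbf{Main obstacle.} The hardest piece is the $\Sl$ bookkeeping in the inductive step: one must understand how the factor $c^2 = \gcd(\mu_v, \mu_w)^2$ appearing in the definition of $\Se$ arises combinatorially from the factorisation of crossing linking numbers. The degenerate cases $\mu_v = 0$ or $\mu_w = 0$ require the alternative formulas in Definition~\ref{def:SGamma} and a separate (but shorter) check. The base case is computationally heavy but conceptually routine once reciprocity is invoked; the bulk of the work in the full proof is the linear-algebraic juggling required to match the rational-function identities on both sides.
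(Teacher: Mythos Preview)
Your inductive step has a genuine gap. After cutting $\Gamma$ along $e$, the pieces $\Gamma_1$ and $\Gamma_2$ are \emph{multilinks}: the new arrowheads carry multiplicities $\mu_v,\mu_w$, which are almost never equal to $1$. The statement of Theorem~\ref{thm:main} is for links, and the paper warns explicitly (just after Example~3.3.3) that it \emph{fails} for general multilinks---formula \eqref{eq:Gammaab} for $\os(\Gamma(a,b))$ contains the Dedekind sum $\s(a,b)$, which is not a rational function of $a,b$, whereas $S(\Gamma(a,b))$ is rational. So you cannot invoke the inductive hypothesis on $\Gamma_1,\Gamma_2$, and consequently your ``combinatorial identity'' $S(\Gamma_1)+S(\Gamma_2)=S(\Gamma)+3\eta$ cannot hold either: if it did, splice additivity of $\os$ would force $\os+\tfrac13 S$ to vanish on every splice component, contradicting \eqref{eq:Gammaab}. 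In particular no amount of $\Sl$ bookkeeping will produce the $c^2$ term---that term is there precisely to record a Dedekind reciprocity contribution, not a linking-number one.

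The paper's remedy is to \emph{complete} $\Gamma_i$ to a genuine link $\wt{\Gamma}_i$ by splicing on the elementary graph $\Gamma(\mu_v,\mu_w)$ (respectively $\Gamma(\mu_w,\mu_v)$); see Definition~\ref{def:complet} and Proposition~\ref{prop:inductstep}. Now the inductive hypothesis applies to $\wt{\Gamma}_1,\wt{\Gamma}_2$, and one must compare $S(\Gamma)$ with $S(\wt{\Gamma}_1)+S(\wt{\Gamma}_2)$ while simultaneously controlling the correction $\os(\Gamma(\mu_v,\mu_w))+\os(\Gamma(\mu_w,\mu_v))$. The point is that in this \emph{sum} the two Dedekind terms $\s(\mu_v,\mu_w)+\s(\mu_w,\mu_v)$ combine via the Reciprocity Law into the rational expression $\Rs(\mu_v,\mu_w)/12$, and \emph{that} is where the $c^2=\gcd(\mu_v,\mu_w)^2$ in $\Se$ actually comes from. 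So reciprocity is essential in the inductive step, not only in the base case. A side effect is that completion adds a node, so the induction cannot simply be on the number of nodes; the paper inducts on the diameter (and the number of maximal paths) and needs a base case at diameter~$3$ rather than diameter~$1$.
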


The proof of Theorem~\ref{thm:main} is given in Section~\ref{s:proofofmain}, now let us provide some examples.

\begin{example}\label{ex:knot}
Assume that $\Gamma$ is an iterated torus knot as in Figure~\ref{fig:knot}.
With the notation in Figure~\ref{fig:knot} we have
\begin{align*}
\Sl&=0\\
\Sn&=\sum_{j=1}^n p_jq_j\\
\Sv&=-\frac{p_1}{q_1}-\sum_{j=1}^n\frac{q_j}{p_j}\\
\Se&=\sum_{j=1}^{n-1}\frac{1}{p_jq_j}-\frac{p_2}{q_2}-\dots-\frac{p_n}{q_n}\\
\Sa&=\frac{1}{p_nq_n}
\end{align*}
Adding this up we obtain
\[S(\Gamma)=\sum(p_j-1/p_j)(q_j-1/q_j).\]
It is known, see for example \cite{Bo},
that $\os=-\frac13 S(\Gamma)$ in this case. In particular, we have verified Theorem~\ref{thm:main} for all iterated torus knots.
\end{example}

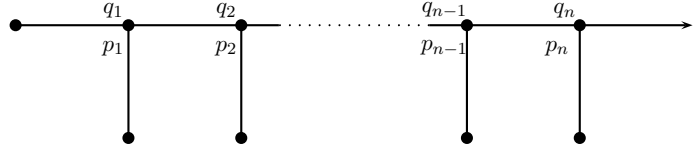
\begin{figure}[h]
\begin{pspicture}(-5,-2)(5,1)
\psline(-4.5,0)(-1,0)
\psline{->}(1,0)(4.5,0)
\psline[linestyle=dotted](-1,0)(1,0)
\psline(-3,0)(-3,-1.5)
\psline(-1.5,0)(-1.5,-1.5)
\psline(1.5,0)(1.5,-1.5)
\psline(3,0)(3,-1.5)
\pscircle[fillcolor=black,fillstyle=solid](-4.5,0){0.08}
\pscircle[fillcolor=black,fillstyle=solid](-3,0){0.08}
\pscircle[fillcolor=black,fillstyle=solid](-1.5,0){0.08}
\pscircle[fillcolor=black,fillstyle=solid](-3,-1.5){0.08}
\pscircle[fillcolor=black,fillstyle=solid](-1.5,-1.5){0.08}
\pscircle[fillcolor=black,fillstyle=solid](1.5,0){0.08}
\pscircle[fillcolor=black,fillstyle=solid](3,0){0.08}
\pscircle[fillcolor=black,fillstyle=solid](1.5,-1.5){0.08}
\pscircle[fillcolor=black,fillstyle=solid](3,-1.5){0.08}
\rput(-3.2,0.2){\psscalebox{0.8}{$q_1$}}
\rput(-1.7,0.2){\psscalebox{0.8}{$q_2$}}
\rput(1.2,0.2){\psscalebox{0.8}{$q_{n-1}$}}
\rput(2.8,0.2){\psscalebox{0.8}{$q_n$}}
\rput(-3.2,-0.3){\psscalebox{0.8}{$p_1$}}
\rput(-1.7,-0.3){\psscalebox{0.8}{$p_2$}}
\rput(1.2,-0.3){\psscalebox{0.8}{$p_{n-1}$}}
\rput(2.7,-0.3){\psscalebox{0.8}{$p_n$}}
\end{pspicture}
\caption{A graph $\Gamma$ representing an iterated torus knot.}\label{fig:knot}
\end{figure}

\begin{remark}
We remark that not all knots that are graph links are necessarily iterated torus knots. For example, all connected sums of iterated torus knots
are also graph links.
\end{remark}

\begin{example}
For the link from Example~\ref{ex:EN} we can compute the signature directly using~\eqref{eq:integralsplice}. The vertex on the left
contributes to
\[-1-4\left(\s(-19,38)+\s(-20,38)+\s(1,38)\right)=-1-4\cdot\frac{45}{19}.\]
The vertex of the right contributes to
\[-1-4\left(\s(1,18)+\s(-9,18)+\s(2,18)+\s(-12,18)\right)=-1-4\cdot\frac{11}{6}.\]
Hence the average signature is equal to $-\frac{1015}{57}$.
\end{example}

We point out that Theorem~\ref{thm:main} does not hold for general multilinks. As formula \eqref{eq:Gammaab} suggests, for general multilinks
one can not avoid Dedekind sums.

\section{Proof of Theorem~\ref{thm:main}}\label{s:proofofmain}

\subsection{Some terminology used in the proof}\label{ss:some}

We begin with the following definition, which is given to make precise notions, which are intuitively obvious.

\begin{definition}
A \emph{path} in a splice graph $\Gamma$ is a collection of nodes $v_1,\dots,v_k\in\V$ and edges $e_1,\dots,e_{k-1}\in\E$ such that $e_j$ connects $v_j$ to $v_{j+1}$
and all the nodes $v_1,\ldots,v_k$ are distinct. The \emph{length} of a path is the number of nodes occurring on the path.
The \emph{diameter} of a graph $\Gamma$, denoted $l(\Gamma)$, is the maximal length of a path in $\Gamma$.
\end{definition}

A graph with diameter $1$ has exactly one node. A graph with diameter $2$ has exactly two nodes. However, a graph with diameter $3$ can have arbitrary many nodes.
For instance, the graph in Figure~\ref{fig:graph2} has diameter~$3$.

\begin{definition}\label{def:elem}
For any positive integers $a,b$, the \emph{elementary} graph $\Gamma(a,b)$ is the graph as in Figure~\ref{fig:ab}.
\end{definition}

Applying \eqref{eq:integralsplice} to $\Gamma(a,b)$ we obtain a formula, which we will use several times in the future.

\begin{equation}\label{eq:Gammaab}
\os(\Gamma(a,b))=-1-\frac13\left(\Rs(1,a(b+1))+\Rs(b,(b+1)a)-\Rs(1,a)-12\s(a,b)\right).
\end{equation}

We shall use elementary graphs  to transform a graph of a multilink into a graph of a link in a controlled way.

\begin{definition}\label{def:complet}
Let $\Gamma$ be a graph with one arrowhead of multiplicity $m\neq 1$ and the multiplicity of any other arrowhead is equal to $1$.
The \emph{completion} of $\Gamma$
is the graph $\widetilde{\Gamma}$ obtained by
\begin{itemize}
\item If $m>1$: splicing a graph $\Gamma(m,b)$ to $\Gamma$ along the arrowhead with multiplicity $m$, where $b$
is a unique positive integer for which this is possible.
\item If $m=0$: replacing the arrowhead with multiplicity $1$ by a leaf.
\end{itemize}
If $\Gamma$ is a graph such that all the arrowheads have multiplicity $1$, we define the completion to be $\widetilde{\Gamma}=\Gamma$.
\end{definition}

We shall need one more notion.

\begin{definition}
A graph $\Gamma$ is \emph{simple} if all but possibly one of its splice components are
elementary.
\end{definition}
\subsection{Proof of Theorem~\ref{thm:main} up to a few lemmas}

We begin with the following lemma.
\begin{lemma}\label{lem:three}
Theorem~\ref{thm:main} holds for all simple graphs of diameter $3$, for which all the outer nodes (that is those that are adjacent
to one node) are elementary.
\end{lemma}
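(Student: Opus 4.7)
The plan is to decompose $\Gamma$ into its splice components using Lemma~\ref{lem:spladds}. A simple graph of diameter $3$ with elementary outer nodes consists of one central node $v_0$ together with outer nodes $v_1,\dots,v_k$, each joined to $v_0$ by a single edge $e_i$, and each outer splice component being an elementary graph (possibly after the completion of Definition~\ref{def:complet}). Cutting along the $e_i$ and applying splice additivity yields
\[
\os(\Gamma)\;=\;\os(\Gamma_0)+\sum_{i=1}^k \os(\Gamma_i)+\eta,
\]
where $\Gamma_0$ is the central splice component, $\Gamma_i\simeq \Gamma(a_i,b_i)$ is elementary, and $\eta$ counts those cut edges whose two newly created arrowheads both have non--zero multiplicities. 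By Lemma~\ref{lem:stabil}, replacing $\Gamma_i$ by its completion does not change $S$, so it is enough to prove the formula for the completed graphs.

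Next I would make each summand explicit: formula~\eqref{eq:Gammaab} expresses $\os(\Gamma_i)$ as a rational combination of $\Rs$--values plus a single Dedekind sum $\s(a_i,b_i)$, while formula~\eqref{eq:integralsplice} expresses $\os(\Gamma_0)$ as $1-\#\Gamma_0-4\sum_j\s(s_j,m_{v_0})$. The decisive step, and the main obstacle, is to show that for each edge $e_i$ the Dedekind sum $\s(s_i,m_{v_0})$ and the corresponding $\s(a_i,b_i)$ can be paired and converted into a rational expression. The pairing is driven by the splicing arithmetic of~\cite{EN}: the cut produces arrowhead multiplicities $\mu_v,\mu_w$ and weights $d_{ve},d_{we}$ for which the modular inverse condition in Proposition~\ref{prop:twotermlaw} (or Proposition~\ref{prop:prp} in the special cases $\mu_v=0$ or $\mu_w=0$) holds automatically, and the reductions~\eqref{eq:cancel} absorb the remaining common factors. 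The resulting $\Rs$--term should be exactly $c^2\bigl(\tfrac{d_{ve}}{\mu_v m_v}+\tfrac{d_{we}}{\mu_w m_w}-\tfrac{1}{\mu_v\mu_w}\bigr)$, which is the edge contribution to $\Se$ in Definition~\ref{def:SGamma}; in the degenerate cases one checks directly that it matches the other two lines of the same definition.

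Once every Dedekind sum has been eliminated, what remains is a purely rational expression in the weights and multiplicities of $\Gamma$. The final step is to match this expression, term by term, against the pieces of $S(\Gamma)$: the cross terms produced by reciprocity reproduce $\Se$; the diagonal $d_v(\nu(v)-2)$ terms in~\eqref{eq:integralsplice} at $v_0$ yield $\Sn$ (the outer nodes, having valency $3$ and $d_v\cdot 1$, combine consistently with the $\Rs(1,a)$ pieces); the $-\Rs(1,a_i)/3$ pieces in~\eqref{eq:Gammaab} yield $\Sv$; the $\Rs(1,a_i(b_i+1))$ pieces that survive yield $\Sa$; and the constant $1-\#\Gamma_0$ combined with $\eta$ and with the constants $-1$ coming from each $\os(\Gamma_i)$ reproduce the linking contribution $\Sl$ (using that $\lk$ for pairs of arrowheads in different splice components factors through the weights of the cut edge). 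The hypothesis that $L$ is neither the unknot nor the Hopf link is what guarantees that no constant residue is left over at the end. The entire argument reduces, modulo the reciprocity identification, to a straightforward, if lengthy, bookkeeping check analogous to the one in the proof of Lemma~\ref{lem:stabil}.
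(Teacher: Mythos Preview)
Your overall strategy---splice-decompose, evaluate each piece via \eqref{eq:integralsplice} and \eqref{eq:Gammaab}, then kill the Dedekind sums by pairing them through Propositions~\ref{prop:prp} and~\ref{prop:twotermlaw}---is exactly what the paper does. The paper organizes the computation by a case split you do not mention: since the central node has at most two adjacent weights $p,q\neq 1$, one distinguishes whether the $p$- and $q$-edges lead to leaves or to outer elementary nodes (Figures~\ref{fig:graph2}, \ref{fig:graph3}, \ref{fig:graph4}). In the leaf case reciprocity alone suffices; when a $p$- or $q$-edge hits an outer node one needs the two--term law (the ``magic trick'' \eqref{eq:magictrick}). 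Your proposal conflates these, but the ingredients you name are the right ones.

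There is, however, a concrete error in your term--matching. The linking contribution $\Sl$ does \emph{not} come from the additive constants $1-\#\Gamma_0$, $\eta$, and the $-1$'s in \eqref{eq:Gammaab}; those constants cancel against the $-3$'s hidden in the $\Rs$-terms. The quantity $\Sl=\sum_j m_jM_j$ arises from the leading part of $\Rs(1,m_j(M_j+1))$, i.e.\ from the $m_j(M_j+1)$ terms in the expansion \eqref{eq:sum}. Likewise your assignments for $\Sn$, $\Sv$, $\Sa$ are only roughly correct: each is assembled from several fragments of different $\Rs$-terms (e.g.\ $\Sv$ picks up $-p/q-q/p$ from $\Rs(p,q)$ and $-1/m_j$ as a piece of $-\Rs(1,m_j)$). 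The exclusion of the unknot and the Hopf link is not what makes the constants vanish; it is needed so that an almost minimal graph with at least one node exists at all. So the plan is sound, but the bookkeeping you sketch would not close as written; following the explicit expansion in \eqref{eq:sum} (and its analogue \eqref{eq:sum-2}) is the safest route.
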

\begin{proof}
Let $\Gamma$ be a simple graph of diameter $3$ such that all the outer nodes are elementary. Then the only non-elementary node can be the central one.
Since the graph represents a link in $S^3$, at most two adjacent weights to the central node might be different than $1$, we denote them $p$ and $q$
(this also covers the case that $p=1$ or $q=1$). According to whether the edge of the central node is adjacent to a leaf or not, we have three cases
depicted in Figures~\ref{fig:graph2}, \ref{fig:graph3} and \ref{fig:graph4} (because $\Gamma$ is almost minimal).
We prove Theorem~\ref{thm:main} by a direct computation: in
Section~\ref{sec:graph2} we compute it for graphs in Figure~\ref{fig:graph2}. In Section~\ref{sec:graph3} we show, how the computations
change in the case of graphs in Figure~\ref{fig:graph3} and Figure~\ref{fig:graph4}.
\end{proof}

The main argument for passing from the special cases of low diameter to the general case, and in fact the gist of the proof of Theorem~\ref{thm:main}
is given by the following proposition.

\begin{proposition}\label{prop:inductstep}
Let $\Gamma$ be a graph of a link and $e$ an edge connecting two nodes. Let $\Gamma_1$ and $\Gamma_2$ be the graphs which are the result
of cutting $\Gamma$ along $e$ as in Figure~\ref{fig:splice}. Let $\wt{\Gamma}_1$ and $\wt{\Gamma}_2$ be
the completions along the arrowhead that appear as a result of cutting $e$.

If Theorem~\ref{thm:main} holds for $\wt{\Gamma}_1$ and $\wt{\Gamma}_2$, then it holds for $\Gamma$.
\end{proposition}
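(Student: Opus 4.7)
The plan is to reduce the identity $\os(\Gamma)=-\tfrac13 S(\Gamma)$ to the same identity on $\wt{\Gamma}_1$ and $\wt{\Gamma}_2$ and to verify the reduction by bookkeeping confined to a neighbourhood of the cut edge $e$. Write $v, w$ for the nodes of $\Gamma$ joined by $e$, with adjacent weights $d_{ve}, d_{we}$, and let $\mu_v, \mu_w$ be the multiplicities of the arrowheads produced by cutting $e$.

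\emph{Step 1 (signature side).} Lemma~\ref{lem:spladds} gives $\os(\Gamma) = \os(\Gamma_1) + \os(\Gamma_2) + \eta$, where $\eta = 1$ iff $\mu_v\mu_w \neq 0$. By Definition~\ref{def:complet} each $\wt{\Gamma}_i$ differs from $\Gamma_i$ either not at all, by replacing a multiplicity-$0$ arrowhead with a leaf, or by splicing on an elementary graph $\Gamma(\mu_\bullet, b_\bullet)$; a second application of Lemma~\ref{lem:spladds} (trivially in the first two cases, and combined with formula~\eqref{eq:Gammaab} in the splicing case) yields an explicit expression
\[
C_i := \os(\wt{\Gamma}_i) - \os(\Gamma_i),
\]
which is a rational function of the multiplicities and weights at the cut, plus at most one Dedekind sum $\s(\mu_\bullet, b_\bullet)$. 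Feeding in the hypothesis $\os(\wt{\Gamma}_i) = -\tfrac13 S(\wt{\Gamma}_i)$ reduces the proposition to the purely combinatorial identity
\[
S(\Gamma) \;=\; S(\wt{\Gamma}_1) + S(\wt{\Gamma}_2) + 3(C_1 + C_2) - 3\eta. \qquad (\ast)
\]

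\emph{Step 2 (combinatorial side).} I verify $(\ast)$ by computing each piece $\Sl, \Sn, \Sv, \Se, \Sa$ on the three graphs and summing the differences. Since the graphs agree away from a neighbourhood of $e$, the differences localise: $\Sl$ loses exactly the cross-pairs $\lk_\Gamma(a, a')$ with $a \in \A(\Gamma_1)$ and $a' \in \A(\Gamma_2)$, and these factor through the weights on $e$; $\Sn$ and $\Sv$ pick up the one node and at most one leaf introduced by each completion; $\Se$ loses the full contribution of $e$ (evaluated via the three-case formula in Definition~\ref{def:SGamma}, depending on whether $\mu_v$ or $\mu_w$ vanishes) and gains the inter-node edge appearing inside the spliced elementary components; finally, $\Sa$ is adjusted by the new arrowheads of $\wt{\Gamma}_i$.

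\emph{Step 3 (cancellation of Dedekind sums).} All non-rational terms in $(\ast)$ enter through the Dedekind sums $\s(\mu_\bullet, b_\bullet)$ present in the $C_i$ via~\eqref{eq:Gammaab}. To remove them, I apply the Reciprocity Law (Proposition~\ref{prop:prp}) once per completion, trading $\s(\mu_\bullet, b_\bullet)$ for $\Rs(\mu_\bullet, b_\bullet)/12 - \s(b_\bullet, \mu_\bullet)$; the remaining term is then forced to collapse to a rational expression by the identities~\eqref{eq:cancel} combined with the congruence that fixes $b_\bullet$ as the unique positive integer making the splicing admissible. The resulting purely rational identity is verified by direct computation.

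\emph{Main obstacle.} The bookkeeping in Steps~2 and~3 is mechanical but voluminous; the genuine difficulty lies in Step~3, where one must choose $b_\bullet$ so that, after Reciprocity, the surviving Dedekind terms cancel precisely against the cross-linking piece in $\Sl$ and the $\gcd(\mu_v, \mu_w)^2/(\mu_v\mu_w)$ piece of $\Se(\Gamma)$. The two degenerate cases $\mu_v = 0$ and $\mu_w = 0$ must be handled separately using the corresponding branches of Definition~\ref{def:SGamma}, and this is where I expect most of the effort to go.
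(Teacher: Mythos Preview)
Your overall architecture---splice additivity on the signature side, term-by-term bookkeeping for $S$, and a separate treatment of the degenerate branch---is exactly the paper's. The gap is in Step~3, and it stems from not having identified what the completion parameter $b_\bullet$ actually is.

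You describe $b_\bullet$ as ``the unique positive integer making the splicing admissible,'' fixed by some congruence, and then propose to apply Reciprocity once per completion and hope the leftover terms $\s(b_\bullet,\mu_\bullet)$ collapse via \eqref{eq:cancel}. You also say the surviving Dedekind terms should cancel against the cross-linking piece of $\Sl$ and the $\gcd(\mu_v,\mu_w)^2/(\mu_v\mu_w)$ piece of $\Se(\Gamma)$. That last expectation cannot be right: $S(\Gamma)$ is a rational function of the weights and multiplicities and contains no Dedekind sums, so whatever Dedekind sums occur in $C_1+C_2$ must annihilate \emph{each other}, not match against pieces of $S$.

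The point you are missing is that $b_\bullet$ is not mysterious: if cutting $e$ produces arrowheads of multiplicities $a=\mu_v$ on $\Gamma_1$ and $b=\mu_w$ on $\Gamma_2$, then the completion of $\Gamma_1$ is $\Gamma_1$ spliced with $\Gamma(a,b)$ and the completion of $\Gamma_2$ is $\Gamma_2$ spliced with $\Gamma(b,a)$. Thus the only Dedekind sums appearing are $\s(a,b)$ and $\s(b,a)$, and a \emph{single} use of the Reciprocity Law replaces their sum by $\Rs(a,b)/12$. After that, your identity $(\ast)$ is a short rational computation: one evaluates $-3\bigl(\os(\Gamma(a,b))+\os(\Gamma(b,a))+1\bigr)$ explicitly from \eqref{eq:Gammaab} and matches it against the itemised difference $S(\Gamma)-S(\wt\Gamma_1)-S(\wt\Gamma_2)$, exactly as you outline in Step~2. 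There is no ``main obstacle'' here.

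Contrary to your expectation, the degenerate case $\mu_w=0$ is the easier one: $\os(\Gamma(a,0))=0$ since this graph represents an unknot, so $\os(\Gamma)=\os(\wt\Gamma_1)+\os(\wt\Gamma_2)$ with no correction, and one verifies $S(\Gamma)=S(\wt\Gamma_1)+S(\wt\Gamma_2)$ directly from the $\mu_w=0$ branch of $\Se$ in Definition~\ref{def:SGamma}. No Dedekind sums enter at all.
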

Proposition~\ref{prop:inductstep} is proved in Sections~\ref{sec:inductstep} and \ref{sec:inductstep2}.

In the following lemma we deal with graphs of small diameter.

\begin{lemma}\label{cor:two}
Theorem~\ref{thm:main} holds for all graphs of diameter $1$ and for all simple graphs of diameter $2$.
\end{lemma}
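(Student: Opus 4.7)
The plan is to reduce both cases to Lemma~\ref{lem:three} via the splicing trick: attach elementary splice components $\Gamma(1, b_j)$ to selected arrowheads of $\Gamma$. The preservation of $S(\Gamma)$ under such splicings is guaranteed by Lemma~\ref{lem:stabil}, and the preservation of the underlying link (hence of $\os(\Gamma)$) follows from \cite[Theorem 8.1]{EN} together with the almost-minimality of $\Gamma$. One then only needs to engineer the splicings so that the resulting graph $\Gamma^*$ satisfies the hypotheses of Lemma~\ref{lem:three}.

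For a graph $\Gamma$ of diameter $1$ with at least two arrowheads, I would splice a copy of $\Gamma(1, b_j)$ onto each arrowhead, with $b_j$ chosen to make the splicing admissible. The resulting $\Gamma^*$ has a longest path of the shape (new outer node) -- (central node of $\Gamma$) -- (new outer node), so its diameter is $3$; its only possibly non-elementary splice component is the central one inherited from $\Gamma$, and its outer nodes are precisely the attached elementary pieces $\Gamma(1, b_j)$. Thus $\Gamma^*$ meets the hypotheses of Lemma~\ref{lem:three}, which combined with the invariance statements above yields $\os(\Gamma)=-\frac13 S(\Gamma)$. The sub-subcase of a single arrowhead (so $L$ is a knot) can be extracted from Example~\ref{ex:knot}, or handled by one extra splicing of $\Gamma(1,b)$ which reduces it to the degenerate simple diameter-$2$ case discussed below.

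For a simple graph $\Gamma$ of diameter $2$, let $v_1, v_2$ be its two nodes, with $v_2$ elementary so that its splice component has the form $\Gamma(a, b)$. If $v_1$ carries at least one arrowhead of $\Gamma$, I would splice $\Gamma(1, b_j)$ onto each such arrowhead. The resulting $\Gamma^*$ has diameter $3$ (a longest path runs from a newly spliced outer node, through $v_1$, to $v_2$), is simple, and its outer nodes are the elementary $v_2$ together with the newly spliced elementary pieces; Lemma~\ref{lem:three} then applies. In the remaining subcase, where $v_1$ carries no arrowheads of $\Gamma$, the splicing approach does not produce a graph of diameter $3$ with all outer nodes elementary, and one instead cuts $\Gamma$ along the unique edge $e$ joining $v_1$ and $v_2$. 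Splice additivity (Lemma~\ref{lem:spladds}) gives $\os(\Gamma)=\os(\Gamma_1)+\os(\Gamma_2)+\eta$, with $\os(\Gamma_1)$ computed from \eqref{eq:integralsplice} and $\os(\Gamma_2)$ from the closed formula \eqref{eq:Gammaab}. Matching the result with $-\frac13 S(\Gamma)$ is then a direct computation using the Reciprocity Law (Proposition~\ref{prop:prp}) and \eqref{eq:sandR}, where the contribution of $e$ to $\Se$ falls into the $\mu_w=0$ branch of Definition~\ref{def:SGamma}.

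The principal obstacle is the Dedekind-sum manipulation in this last subcase: one must verify by hand that after applying reciprocity, the sum $\sum_j \s(s_j,m)$ coming from $\Gamma_1$, together with the four Dedekind sums from \eqref{eq:Gammaab}, collapses into the rational expression $-\tfrac13 S(\Gamma)$ dictated by Definition~\ref{def:SGamma}. Away from this subcase, the argument is essentially a mechanical check that the elementary splicings produce graphs with all outer nodes elementary, so that Lemma~\ref{lem:three} applies verbatim; the only remaining subtlety is confirming that almost-minimality of $\Gamma$ is preserved by each splicing, which follows from the discussion preceding Lemma~\ref{lem:stabil}.
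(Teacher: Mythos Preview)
Your approach coincides with the paper's in all cases but one. For the simple diameter-$2$ graph whose non-elementary node $v_1$ carries no arrowheads, the paper simply observes that such a $\Gamma$ is the splice diagram of an iterated torus knot (the case $n=2$ of Figure~\ref{fig:knot}), so Example~\ref{ex:knot} gives $\os(\Gamma)=-\tfrac13 S(\Gamma)$ immediately. Your proposed alternative---cut along $e$, compute $\os(\Gamma_1)$ from \eqref{eq:integralsplice} and $\os(\Gamma_2)$ from \eqref{eq:Gammaab}, and match Dedekind sums by hand---would work in principle, but you have not actually carried it out, and it is needlessly laborious compared to the one-line citation of Example~\ref{ex:knot}. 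The same shortcut also disposes of the single-arrowhead diameter-$1$ case, which you correctly note can be read off from Example~\ref{ex:knot}; the paper adds the small observation that almost-minimality forces $\alpha_1=1$ there (otherwise one of the two leaves would be a bad leaf), so the graph is literally that of a torus knot.

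Apart from this, the splicing argument you describe---attach elementary $\Gamma(1,b)$ pieces to the arrowheads, invoke Lemma~\ref{lem:stabil} for $S$ and \cite[Theorem~8.1]{EN} for the link type, then apply Lemma~\ref{lem:three}---is exactly the paper's proof.
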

\begin{proof}

Let $\Gamma$ be a graph with a single node. Suppose it has only one arrowhead. Then, as the node can have at most two weights greater than one, the
valency of the node is at most three, so it has to be three. If $\alpha_1=1$ (we use notation in Figure~\ref{fig:splice}, here $k=1$ and $n=3$),
then the graph represents a torus knot. But Theorem~\ref{thm:main} has already been verified for torus knots; this follows
from the computation of $\os$ for torus knots in \cite{KM,Nem2,Bo} and is explained in Example~\ref{ex:knot}.
If $\alpha_1>1$, then $\alpha_2=1$ or $\alpha_3=1$. But then the graph is not almost minimal, so this case can not hold.

Assume that $\Gamma$ has one node but more than one arrowhead. We splice $\Gamma$ with two elementary diagrams of type $\Gamma(1,b)$
and obtain a simple graph of diameter 3, so
Lemma~\ref{lem:three} applies. Notice, that we might introduce a leaf with the nearest weight equal to $1$, but the graph is still almost minimal by construction.
Furthermore, by Lemma~\ref{lem:stabil} splicing $\Gamma$ does not affect $S(\Gamma)$. The topological type of the link does not change either.

If $\Gamma$ has diameter 2, it has two nodes $v_1$ and $v_2$. Suppose that the splice component corresponding to $v_2$ is elementary. If there
are no arrowhead vertices adjacent to $v_1$, it follows that $\Gamma$ is a splice diagram of an iterated torus knot, and the statement follows. If there is
an arrowhead vertex, we splice to it an elementary graph as above. We obtain a simple graph of diameter 3 such that the outer nodes are elementary, thus
we can use Lemma~\ref{lem:three} to conclude the proof in this case.
\end{proof}

\smallskip
\emph{Conclusion of the proof of Theorem~\ref{thm:main}}.

Consider a graph $\Gamma$ of diameter $l$ and suppose there are $k$ different paths of length $l$ on $\Gamma$. Suppose $l>3$ and let us
choose a path consisting of edges $e_1,\ldots,e_{l-1}$. We cut $\Gamma$ along $e_2$ and complete
the resulting graphs to $\wt{\Gamma}_1$ and $\wt{\Gamma}_2$.

It is clear from the construction that for $j=1,2$, $\wt{\Gamma}_j$ either has smaller diameter than $\Gamma$; or it has the same diameter, but lower
number of different paths of length $l$. An inductive step lets us reduce the proof of Theorem~\ref{thm:main} to the case of
graphs of diameter 3 and less.

Let us consider a graph of diameter 2. If it has two non-elementary nodes, we cut $\Gamma$ along the edge connecting the two nodes. The resulting
completed graphs $\wt{\Gamma}_1$ and $\wt{\Gamma}_2$ both have diameter 2 and are simple, therefore Lemma~\ref{cor:two} applies.

Let us now consider a graph $\Gamma$ of diameter $3$. If an outer edge $v$ of $\Gamma$ is not elementary, we cut $\Gamma$ along the unique
edge $e$ connecting $v$ to another node in $\Gamma$. We complete the two graphs to $\wt{\Gamma}_1$ that contains $v$, and $\wt{\Gamma}_2$. Then
$\wt{\Gamma}_1$ has two nodes and is simple. If $\wt{\Gamma}_2$ has diameter $2$, we conclude the proof. In general
$\wt{\Gamma}_2$ still might have diameter $3$, yet
it has one less non-elementary outer node. We
repeat the procedure for $\wt{\Gamma}_2$ until we arrive to the case,
when $\wt{\Gamma}_2$ has no non-elementary outer nodes and we conclude by Lemma~\ref{lem:three}.

\subsection{Theorem~\ref{thm:main} for the graph in Figure~\ref{fig:graph2}}\label{sec:graph2}
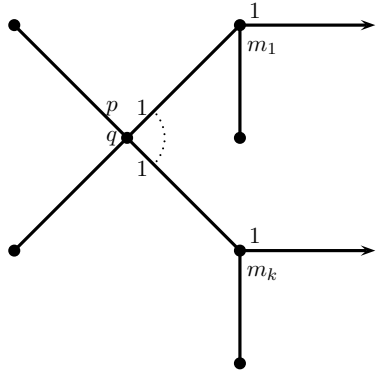
\begin{figure}
\begin{pspicture}(-6,-3)(6,2.5)
\pscircle[fillcolor=black,fillstyle=solid](0,0){0.08}
\pscircle[fillcolor=black,fillstyle=solid](1.5,1.5){0.08}
\pscircle[fillcolor=black,fillstyle=solid](1.5,-1.5){0.08}
\pscircle[fillcolor=black,fillstyle=solid](-1.5,1.5){0.08}
\pscircle[fillcolor=black,fillstyle=solid](-1.5,-1.5){0.08}
\pscircle[fillcolor=black,fillstyle=solid](1.5,0){0.08}
\pscircle[fillcolor=black,fillstyle=solid](1.5,-3){0.08}
\psline[linewidth=1.2pt](-1.5,-1.5)(1.5,1.5)
\psline[linewidth=1.2pt](1.5,-1.5)(-1.5,1.5)
\psline[linewidth=1.2pt](1.5,1.5)(1.5,0)
\psline[linewidth=1.2pt](1.5,-1.5)(1.5,-3)
\psline[linewidth=1.2pt,arrowsize=4pt]{->}(1.5,1.5)(3.3,1.5)
\psline[linewidth=1.2pt,arrowsize=4pt]{->}(1.5,-1.5)(3.3,-1.5)
\psarc[linestyle=dotted,dotsep=2pt](0,0){0.5}{-40}{40}
\rput(-0.2,0.4){\psscalebox{0.8}{$p$}}
\rput(-0.2,0){\psscalebox{0.8}{$q$}}
\rput(0.2,0.4){\psscalebox{0.8}{$1$}}
\rput(0.2,-0.4){\psscalebox{0.8}{$1$}}
\rput(1.8,1.2){\psscalebox{0.8}{$m_1$}}
\rput(1.8,-1.8){\psscalebox{0.8}{$m_k$}}
\rput(1.7,1.7){\psscalebox{0.8}{$1$}}
\rput(1.7,-1.3){\psscalebox{0.8}{$1$}}
\end{pspicture}
\caption{First of the three special graphs. The central vertex has $k$ edges. All the unmarked weights are
assumed to be $1$.}\label{fig:graph2}
\end{figure}

Let us begin with setting up a notational convention for Sections~\ref{sec:graph2} and \ref{sec:graph3}.

In Section~\ref{sec:graph2} $\Gamma$ denotes a splice graph as presented in Figure~\ref{fig:graph2}, in Section~\ref{sec:graph3} $\Gamma$
is the graph in Figure~\ref{fig:graph3} or in Figure~\ref{fig:graph4}.
The splice component of $\Gamma$ will be denoted by subscripts. The central splice components will be $\Gamma_{cen}$
and the components with multiplicities $m_1,\ldots,m_k$ shall be denoted by $\Gamma_1,\ldots,\Gamma_k$. The splice component adjacent
to the edge with weight $p$ shall be denoted $\Gamma_{\textrm{p}}$ (it is absent in Figure~\ref{fig:graph2}), and the component
adjacent to the edge with weight $q$ shall be denoted $\Gamma_{\textrm{q}}$: the latter one is present only in Figure~\ref{fig:graph4}. To distinguish
the last two components from $\Gamma_j$ for $j=p$ or $j=q$, we use roman fonts as subscripts.

The components $\Gamma_j$, $j=1,\ldots,k$, $\Gp$ and $\Gq$ are all elementary of type $\Gamma(m_j,M_j)$, $\Gamma(\mp,\Mp)$ and $\Gamma(\mq,\Mq)$,
where $M_j,\Mp,\Mq$ will be computed.

\smallskip
Now we pass to the case of graph in Figure~\ref{fig:graph2}. We have
\[M_j=M-pqm_j,\]
where
\[M=pq\sum_{j=1}^km_j\]
is the multiplicity of $\Gamma_{cen}$.
Summing \eqref{eq:Gammaab} for $\Gamma(m_1,M_1)$ up to $\Gamma(m_k,M_k)$ we obtain the following quantity:
\begin{equation}\label{eq:totalother}
-k-\frac13\sum_{j=1}^k\left(\Rs(1,m_j(M_j+1))+\Rs(M_j,(M_j+1)m_j)-\Rs(1,m_j)-12\s(m_j,M_j)\right).
\end{equation}
To compute $\os(\Gamma_{cen})$ we use the algorithm from Section~\ref{sec:algor}.
We have $\alpha_1=\dots=\alpha_k=1$, we introduce $\ap=p$ and $\aq=q$. Then $\beta_1=\dots=\beta_k=0$, $\bp=q'$ and $\bp=p'$, where $p'$ and
$q'$ are such that $pp'+qq'=1$. Then $s_j=m_j$ for $j=1,\dots,k$ and $\spp=-q'M/p$, $\sqq=-p'M/q$. Therefore $\os(\Gamma_{cen})$ is equal to
\[(1-k)-4\sum_{j=1}^k\s(m_j,M)-4\s(-q'M/p,M)-4\s(-p'M/q,M).\]
The latter formula can be transformed using \eqref{eq:cancel} to
\begin{equation}\label{eq:cen-vertex}
(1-k)-4\sum_{j=1}^k\left(\frac{1}{12}\Rs(m_j,M)-\s(M_j,m_j)\right)+4s(q',p)+4s(p',q).
\end{equation}
Combining \eqref{eq:totalother} and \eqref{eq:cen-vertex}
with splice additivity of $\os$ (Lemma~\ref{lem:spladds}), we obtain the following formula, which does not involve Dedekind sums anymore.

\begin{equation}\label{eq:os1}
\begin{split}
\os(\Gamma)=&1-k-\frac13\sum_{j=1}^k\left(\Rs(1,m_j(M_j+1))+\Rs(M_j,(M_j+1)m_j)+\right.\\
&+\left.\Rs(m_j,M)-\Rs(m_j,M_j)-\Rs(1,m_j)\right)+\frac13\Rs(p,q).
\end{split}
\end{equation}
We will now substitute \eqref{eq:Rs} for $\Rs$. As the formulae become more involved, to save the space we
present a formula for $-3\os(\Gamma)$. Here $c_j$ denotes $\gcd(m_j,M_j)=\gcd(m_j,M)$.
\begin{equation}\label{eq:sum}
\begin{split}
&\sum_{j=1}^k\left(
m_j(M_j+1)+\frac{2}{m_j(M_j+1)}+
m_j+\frac{m_j}{M_j}+\frac{M_j}{m_j(M_j+1)}+\frac{c_j^2}{M_j(M_j+1)m_j}+\right.\\ 
&+\left.\frac{m_j}{M}+\frac{M}{m_j}+\frac{c_j^2}{m_jM} 
-\frac{m_j}{M_j}-\frac{M_j}{m_j}-\frac{c_j^2}{m_jM_j} 
-\frac{2}{m_j}-m_j\right)-\\ 
&-\frac{p}{q}-\frac{q}{p}-\frac{1}{pq}.
\end{split}
\end{equation}
There are some cancellations in the above formula. We have $\sum\frac{m_j}{M}-\frac{1}{pq}=0$, $\sum(\frac{M}{m_j}-\frac{M_j}{m_j})=kpq$
and $\frac{1}{m_j(M_j+1)}+\frac{M_j}{m_j(M_j+1)}-\frac{1}{m_j}=0$.
We observe that
\begin{itemize}
\item $\Sl=\sum m_jM_j$;
\item $\Sn=kpq+\sum m_j$;
\item $\Sv=-\sum\frac{1}{m_j}-\frac{p}{q}-\frac{q}{p}$;
\item $\Se=\sum c_j^2\left(\frac{1}{m_jM}+\frac{1}{m_jM_j(M_j+1)}-\frac{1}{m_jM_j}\right)$;
\item $\Sa=\sum\frac{1}{m_j(M_j+1)}$.
\end{itemize}

Hence we obtain
\[-3\os=\Sl+\Sn+\Sv+\Se+\Sa=S(\Gamma)\]
as expected. Theorem~\ref{thm:main} holds in that case.

\subsection{Graphs in Figures~\ref{fig:graph3} and \ref{fig:graph4}}\label{sec:graph3}
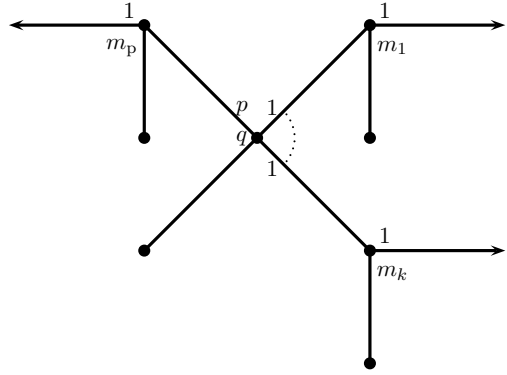
\begin{figure}
\begin{pspicture}(-6,-3)(6,2.5)
\pscircle[fillcolor=black,fillstyle=solid](0,0){0.08}
\pscircle[fillcolor=black,fillstyle=solid](1.5,1.5){0.08}
\pscircle[fillcolor=black,fillstyle=solid](1.5,-1.5){0.08}
\pscircle[fillcolor=black,fillstyle=solid](-1.5,1.5){0.08}
\pscircle[fillcolor=black,fillstyle=solid](-1.5,-1.5){0.08}
\pscircle[fillcolor=black,fillstyle=solid](1.5,0){0.08}
\pscircle[fillcolor=black,fillstyle=solid](-1.5,0){0.08}
\pscircle[fillcolor=black,fillstyle=solid](1.5,-3){0.08}
\psline[linewidth=1.2pt](-1.5,-1.5)(1.5,1.5)
\psline[linewidth=1.2pt](1.5,-1.5)(-1.5,1.5)
\psline[linewidth=1.2pt](1.5,1.5)(1.5,0)
\psline[linewidth=1.2pt](1.5,-1.5)(1.5,-3)
\psline[linewidth=1.2pt](-1.5,1.5)(-1.5,0)
\psline[linewidth=1.2pt,arrowsize=4pt]{->}(1.5,1.5)(3.3,1.5)
\psline[linewidth=1.2pt,arrowsize=4pt]{->}(1.5,-1.5)(3.3,-1.5)
\psline[linewidth=1.2pt,arrowsize=4pt]{->}(-1.5,1.5)(-3.3,1.5)
\psarc[linestyle=dotted,dotsep=2pt](0,0){0.5}{-40}{40}
\rput(-0.2,0.4){\psscalebox{0.8}{$p$}}
\rput(-0.2,0){\psscalebox{0.8}{$q$}}
\rput(0.2,0.4){\psscalebox{0.8}{$1$}}
\rput(0.2,-0.4){\psscalebox{0.8}{$1$}}
\rput(1.8,1.2){\psscalebox{0.8}{$m_1$}}
\rput(1.8,-1.8){\psscalebox{0.8}{$m_k$}}
\rput(1.7,1.7){\psscalebox{0.8}{$1$}}
\rput(-1.7,1.7){\psscalebox{0.8}{$1$}}
\rput(1.7,-1.3){\psscalebox{0.8}{$1$}}
\rput(-1.8,1.2){\psscalebox{0.8}{$\mp$}}
\end{pspicture}
\caption{Second of the special graphs. The $\os$ is computed in Section~\ref{sec:graph3}.}\label{fig:graph3}
\end{figure}

We shall now consider the graph in Figure~\ref{fig:graph3}. To compute $\os$ we will again sum over contributions of
all splice components. We shall denote $M=qm_p+pq\sum M_j$, and $M_j=M-pqm_j$. Furthermore let $\Mp=q\sum m_j$.
Then $M$ is the multiplicity of the central node and the elementary graphs $\Gamma_1,\ldots,\Gamma_k,\Gp$ are of type
$\Gamma(m_1,M_1),\ldots,\Gamma(m_k,M_k),\Gamma(\mp,\Mp)$ respectively. The value of $\os$ of these graphs is
given by \eqref{eq:Gammaab}.

To compute $\os(\Gamma_{cen})$
we choose $p'$ and $q'$ so that $pp'+qq'=1$. Then, by computations in Section~\ref{sec:algor} we obtain
\begin{equation}\label{eq:os-cen}
\os(\Gamma_{cen})=-k-4\left(\sum_j\s(m_j,M)+\s(-q'M/q,M)+\s((\mp-q'M)/p,M)\right).
\end{equation}
We recall that $M=p\Mp+q\mp$, so $(\mp-q'M)/p=p'\mp-q'\Mp$. Applying Proposition~\ref{prop:twotermlaw} we obtain
\begin{equation}\label{eq:magictrick}
\s((\mp-q'M)/p,M)=\s(p,q)+\s(\mp,\Mp)+\frac14-\frac{1}{12}\left(\frac{q\cp^2}{\Mp M}+\frac{\Mp}{qM}+\frac{M}{q\Mp}\right),
\end{equation}
where $\cp=\gcd(\mp,\Mp)$. Adding \eqref{eq:os-cen} to the sum of expressions from \eqref{eq:Gammaab} we obtain
\begin{multline*}
\os=\frac13\sum\left(\Rs(1,m_j(M_j+1))+\Rs(M_j,M_j(M_j+1))+\Rs(m_j,M)-\Rs(m_j,M_j)-\Rs(1,m_j)\right)+\\
+\frac13\left(\Rs(1,\mp(\Mp+1))+\Rs(\Mp,\mp(\Mp+1))-\Rs(1,\mp)\right)+\\
+1-\frac{1}{3}\left(\frac{q\cp^2}{\Mp M}+\frac{\Mp}{qM}+\frac{M}{q\Mp}\right)
\end{multline*}
Substituting again \eqref{eq:Rs} for $\Rs$ we obtain a formula for $-3\os(\Gamma)$, which differs from \eqref{eq:sum}
only in the last two lines.
\begin{equation}\label{eq:sum-2}
\begin{split}
&\sum_{j=1}^k\left(
m_j(M_j+1)+\frac{2}{m_j(M_j+1)}+
m_j+\frac{m_j}{M_j}+\frac{M_j}{m_j(M_j+1)}+\frac{c_j^2}{M_j(M_j+1)m_j}+\right.\\ 
&+\left.\frac{m_j}{M}+\frac{M}{m_j}+\frac{c_j^2}{m_jM} 
-\frac{m_j}{M_j}-\frac{M_j}{m_j}-\frac{c_j^2}{m_jM_j} 
-\frac{2}{m_j}-m_j\right)-\\ 
&+\mp(\Mp+1)+\frac{2}{\mp(\Mp+1)}+\frac{\mp}{\Mp(\mp+1)}+\frac{\cp^2}{\Mp(\Mp+1)\mp}+\frac{\mp}{\Mp}-\frac{2}{\mp}-\\
&-\frac{q\cp^2}{\Mp M}-\frac{\Mp}{qM}-\frac{M}{q\Mp}.
\end{split}
\end{equation}

The terms with $c_j$ and $\cp$ are equal to
\begin{equation}\label{eq:termswithc}
\sum\left(\frac{c_j^2}{M_j(M_j+1)m_j}+\frac{c_j^2}{m_jM}-\frac{c_j^2}{m_jM_j}\right)+\frac{\cp^2}{\Mp(\Mp+1)\mp}-\frac{q\cp^2}{\Mp M}.
\end{equation}
Observe that $\frac{p}{\mp M}-\frac{1}{\mp\Mp}=-\frac{q}{\Mp M}$,
 so \eqref{eq:termswithc} is actually equal to $\Se(\Gamma)$.
The terms $\sum(m_j/M)-(\Mp/qM)$ cancel, indeed $\Mp=q\sum m_j$. Then we can also simplify $\frac{\mp}{\Mp}-\frac{qM}{\Mp}=-\frac{p}{q}$.
Repeating the procedure from Section~\ref{sec:graph2} we conclude that $\os(\Gamma)=-\frac13S(\Gamma)$. We omit straightforward but
tedious details.

\bigskip
\begin{figure}
\begin{pspicture}(-6,-3)(6,2.5)
\pscircle[fillcolor=black,fillstyle=solid](0,0){0.08}
\pscircle[fillcolor=black,fillstyle=solid](1.5,1.5){0.08}
\pscircle[fillcolor=black,fillstyle=solid](1.5,-1.5){0.08}
\pscircle[fillcolor=black,fillstyle=solid](-1.5,1.5){0.08}
\pscircle[fillcolor=black,fillstyle=solid](-1.5,-1.5){0.08}
\pscircle[fillcolor=black,fillstyle=solid](1.5,0){0.08}
\pscircle[fillcolor=black,fillstyle=solid](-1.5,0){0.08}
\pscircle[fillcolor=black,fillstyle=solid](1.5,-3){0.08}
\pscircle[fillcolor=black,fillstyle=solid](-1.5,-3){0.08}
\psline[linewidth=1.2pt](-1.5,-1.5)(1.5,1.5)
\psline[linewidth=1.2pt](1.5,-1.5)(-1.5,1.5)
\psline[linewidth=1.2pt](1.5,1.5)(1.5,0)
\psline[linewidth=1.2pt](1.5,-1.5)(1.5,-3)
\psline[linewidth=1.2pt](-1.5,-1.5)(-1.5,-3)
\psline[linewidth=1.2pt](-1.5,1.5)(-1.5,0)
\psline[linewidth=1.2pt,arrowsize=4pt]{->}(1.5,1.5)(3.3,1.5)
\psline[linewidth=1.2pt,arrowsize=4pt]{->}(1.5,-1.5)(3.3,-1.5)
\psline[linewidth=1.2pt,arrowsize=4pt]{->}(-1.5,1.5)(-3.3,1.5)
\psline[linewidth=1.2pt,arrowsize=4pt]{->}(-1.5,-1.5)(-3.3,-1.5)
\psarc[linestyle=dotted,dotsep=2pt](0,0){0.5}{-40}{40}
\rput(-0.2,0.4){\psscalebox{0.8}{$p$}}
\rput(-0.2,0){\psscalebox{0.8}{$q$}}
\rput(0.2,0.4){\psscalebox{0.8}{$1$}}
\rput(0.2,-0.4){\psscalebox{0.8}{$1$}}
\rput(1.8,1.2){\psscalebox{0.8}{$m_1$}}
\rput(1.8,-1.8){\psscalebox{0.8}{$m_k$}}
\rput(1.7,1.7){\psscalebox{0.8}{$1$}}
\rput(-1.7,1.7){\psscalebox{0.8}{$1$}}
\rput(-1.7,-1.3){\psscalebox{0.8}{$1$}}
\rput(1.7,-1.3){\psscalebox{0.8}{$1$}}
\rput(-1.8,1.2){\psscalebox{0.8}{$\mp$}}
\rput(-1.8,-1.8){\psscalebox{0.8}{$\mq$}}
\end{pspicture}
\caption{The last of the three special graphs.}\label{fig:graph4}
\end{figure}
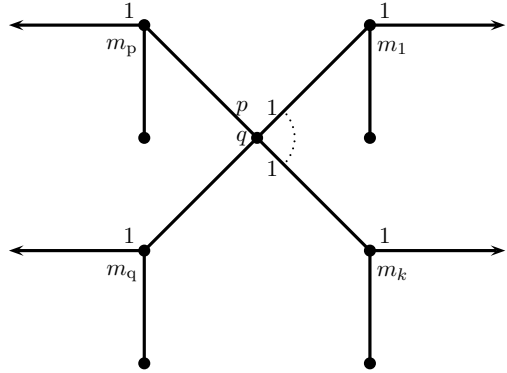

Essentially the same argument works for the graph in Figure~\ref{fig:graph4}. We need to use the trick from \eqref{eq:magictrick} twice, otherwise
the proof is essentially the same.

\subsection{Proof of Proposition~\ref{prop:inductstep}. First case}\label{sec:inductstep}
\begin{figure}[h]
\begin{pspicture}(-6,-4)(6,1.5)
\psline(-6,-0.3)(-6,0.3)(-4.5,0.3)(-4.5,-0.3)(-6,-0.3)
\rput(-5.25,0){$\Gamma_1$}
\psline(-3.5,-0.3)(-3.5,0.3)(-2,0.3)(-2,-0.3)(-3.5,-0.3)
\rput(-2.75,0){$\Gamma_2$}
\rput(-6,0.5){\rnode{A}{}}
\rput(-2,0.5){\rnode{B}{}}
\rput(3.5,0.5){\rnode{C}{}}
\rput(6.0,0.5){\rnode{D}{}}
\psbrace(B)(A){\rotateright{$\Gamma$}}
\psbrace(D)(C){\rotateright{$\wt{\Gamma}_2$}}
\psline[linewidth=1.5pt]{<-}(-1.8,0)(-0.7,0)\rput(-1.25,0.2){\psscalebox{0.8}{Splice}}
\psline(-3.5,0)(-4.5,0)
\psline(-0.5,-0.3)(-0.5,0.3)(1,0.3)(1,-0.3)(-0.5,-0.3)
\rput(0.25,0){$\Gamma_1$}
\psline(6.0,-0.3)(6.0,0.3)(4.5,0.3)(4.5,-0.3)(6.0,-0.3)
\rput(5.255,0){$\Gamma_2$}
\psline{->}(1,0)(2,0)\psline{->}(4.5,0)(3.5,0)
\rput(2.37,0.0){\psscalebox{0.8}{$(a)$}}
\rput(3.12,0.0){\psscalebox{0.8}{$(b)$}}
\rput(0,-1.7){
\psline(-6,-0.3)(-6,0.3)(-4.5,0.3)(-4.5,-0.3)(-6,-0.3)
\rput(-5.25,0){$\Gamma_1$}
\psline{->}(-4.5,0)(-1.5,0)\psline(-3,0)(-3,-1.5)
\pscircle[fillcolor=black,fillstyle=solid](-3,0){0.08}
\pscircle[fillcolor=black,fillstyle=solid](-3,-1.5){0.08}
\rput(-3.2,0.2){\psscalebox{0.8}{$1$}}
\rput(-3.2,-0.2){\psscalebox{0.8}{$a$}}
\rput(-2.8,0.2){\psscalebox{0.8}{$1$}}
\rput(-1.2,0){\psscalebox{0.8}{$(1)$}}
\rput(-6,-1.5){\rnode{E}{}}
\rput(-1.5,-1.5){\rnode{F}{}}
\psbrace(E)(F){\rotateleft{$\wt{\Gamma}_1$}}
}
\rput(0,-1.7){
\psline(6.0,-0.3)(6.0,0.3)(4.5,0.3)(4.5,-0.3)(6.0,-0.3)
\rput(5.255,0){$\Gamma_2$}
\psline{->}(4.5,0)(1.5,0)
\pscircle[fillcolor=black, fillstyle=solid](3,0){0.08}
\pscircle[fillcolor=black, fillstyle=solid](3,-1.5){0.08}
\psline{-}(3,0)(3,-1.5)
\rput(1.3,0){\psscalebox{0.8}{$(1)$}}
\rput(3.2,0.2){\psscalebox{0.8}{1}}
\rput(2.8,0.2){\psscalebox{0.8}{1}}
\rput(2.8,-0.2){\psscalebox{0.8}{$b$}}
\rput(1,-1.5){\rnode{G}{}}
\rput(6,-1.5){\rnode{H}{}}
\psbrace(G)(H){\rotateleft{$\wt{\Gamma}_2$}}
}
\end{pspicture}
\caption{Cutting graph, where the multiplicities of new arrows are greater than $0$.}\label{fig:completion}
\end{figure}
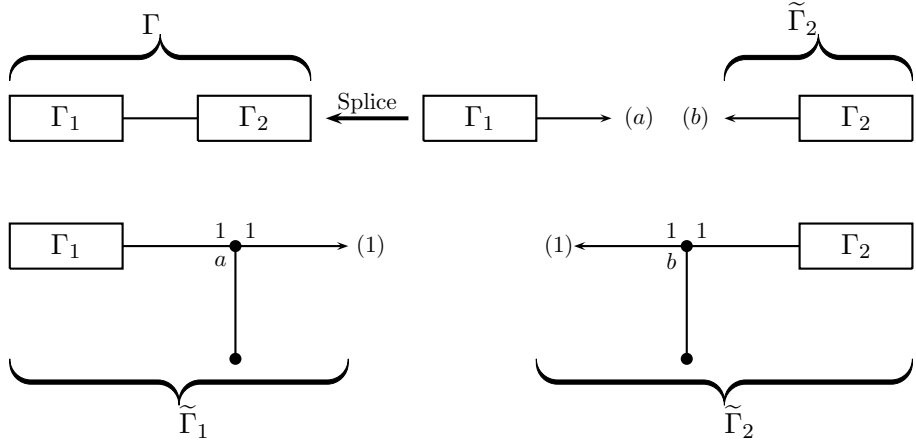

Let $a$ and $b$ be the multiplicities of arrowheads corresponding to the edge $e$ of the $\Gamma$, which we cut to obtain graphs $\Gamma_1$ and $\Gamma_2$.
 We suppose that $a$ is the multiplicity of the arrowhead belonging to graph $\Gamma_1$,
and $b$ the multiplicity of an arrowhead belonging to $\Gamma_2$. Let $v$ and $w$ be vertices of $\Gamma$ adjacent to the edge $e$, such that $v\in\Gamma_1$
and $w\in\Gamma_2$. We consider two cases. The first one is when
neither $a$ nor $b$ are equal to $0$. We will deal with the other case in Section~\ref{sec:inductstep2}.

\smallskip
If $ab\neq 0$, $\wt{\Gamma}_1$ is obtained from $\Gamma_1$ by splicing it with the graph $\Gamma(a,b)$ and $\wt{\Gamma}_2$ is
obtained from $\Gamma_2$ by splicing with the graph $\Gamma(b,a)$.

We have also to consider the case $a=1$ or $b=1$. If, say, $a=1$, according to Definition~\ref{def:complet}, the completion of $\Gamma_1$ is the same as $\Gamma_1$.
To deal with this case, we define $\wt{\Gamma_1}$ as $\Gamma_1$ with $\Gamma(1,b)$ spliced to it (by Lemma~\ref{lem:stabil} it does not affect $S(\Gamma_1)$,
so that this situation is still the same as in Figure~\ref{fig:completion}.
\begin{remark}
One can use the fact that $\Gamma$ is almost minimal to show that $a,b\neq 1$. However it is simpler treat the case $a=1$ equally with $a>1$.
\end{remark}

By splice additivity Lemma~\ref{lem:spladds} we have:
\begin{equation}\label{eq:once}
\os(\Gamma)+\os(\Gamma(a,b))+\os(\Gamma(b,a))+1=\os(\wt{\Gamma}_1)+\os(\wt{\Gamma}_2).
\end{equation}

We combine \eqref{eq:Gammaab} and the analogous expression for $\Gamma(b,a)$ to obtain.
\begin{align*}
-3-&\os(\Gamma(a,b))-3\os(\Gamma(b,a))=\\
=&\Rs(1,a(b+1))+\Rs(1,b(a+1)+\Rs(a,(a+1)b)+\Rs(b,(b+1)a)-\\
&-\Rs(a,b)-\Rs(1,a)-\Rs(1,b)+3=\\
=&2ab+a+b-\frac{1}{a}-\frac{1}{b}+\\
+&\frac{c^2}{ab(a+1)}+\frac{c^2}{ab(b+1)}-\frac{c^2}{ab}+\frac{1}{a(b+1)}+\frac{1}{b(a+1)},
\end{align*}
where $c=\gcd(a,b)$.

If Theorem~\ref{thm:main} holds for $\wt{\Gamma}_1$ and $\wt{\Gamma}_2$, then it holds for $\wt{\Gamma}$ if and only if
the last complicated expression is equal to $-S(\Gamma)+S(\wt{\Gamma}_1)+S(\wt{\Gamma}_2)$. Let us compare contributions to $S$. We
shall need the following result.

\begin{lemma}\label{lem:S4}
We have \emph{$\Sl(\Gamma)-\Sl(\wt{\Gamma}_1)-\Sl(\wt{\Gamma}_2)=-2ab$.}
\end{lemma}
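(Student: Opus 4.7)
The plan is to partition the arrowheads of $\Gamma$ into the sets $A_1$ and $A_2$ lying on the two sides of the cut edge $e$, and to let $\alpha_i$ denote the multiplicity--one arrowhead of $\wt{\Gamma}_i$ contributed by the auxiliary elementary block $\Gamma(a,b)$ or $\Gamma(b,a)$. For a pair of arrowheads $a,a'$ on the same side, the shortest path between them lies entirely in $\Gamma_i$, and the weights at every node on this path are identical in $\Gamma$ and in $\wt{\Gamma}_i$ (the only edge that changed was $e$, whose weight $d_{ve}$ at the hinge vertex $v$ is preserved, and similarly $d_{we}$ at $w$). Thus $\lk_\Gamma(a,a') = \lk_{\wt{\Gamma}_i}(a,a')$, and the same--side contributions cancel, leaving
\[
\Sl(\Gamma) - \Sl(\wt{\Gamma}_1) - \Sl(\wt{\Gamma}_2) = 2 \sum_{a \in A_1,\, a' \in A_2} \lk_\Gamma(a,a') \;-\; 2 \sum_{a \in A_1} \lk_{\wt{\Gamma}_1}(a,\alpha_1) \;-\; 2 \sum_{a' \in A_2} \lk_{\wt{\Gamma}_2}(a',\alpha_2).
\]

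Next I would establish the factorization identities
\[
\lk_\Gamma(a,a') = \frac{\ell_1(a)\, \ell_2(a')}{d_{ve}\, d_{we}}, \quad \lk_{\wt{\Gamma}_1}(a,\alpha_1) = \frac{a\,\ell_1(a)}{d_{ve}}, \quad \lk_{\wt{\Gamma}_2}(a',\alpha_2) = \frac{b\,\ell_2(a')}{d_{we}},
\]
where $\ell_1(a) := \lk_{\Gamma_1}(a, v)$ and $\ell_2(a') := \lk_{\Gamma_2}(a', w)$. The first is the standard multiplicativity of linking under splicing; the second and third follow by observing that along the path from $a$ through $v$ to $\alpha_1$ in $\wt{\Gamma}_1$, the only off--path weight contributed by the new node of $\Gamma(a,b)$ is the weight $a$ on its leaf. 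I would then invoke the multiplicity conservation law that defines the cut: applying the first factorization to $\lk_\Gamma(a',v)$ and demanding $m_v^\Gamma = m_v^{\Gamma_1}$ forces $\sum_{a' \in A_2} \ell_2(a') = a\, d_{we}$, and symmetrically $\sum_{a \in A_1} \ell_1(a) = b\, d_{ve}$.

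Substituting these into the three sums above, each collapses to $ab$, and the lemma follows from $2(ab - ab - ab) = -2ab$. The main technical obstacle is the careful weight accounting at the hinge vertex $v$: one must track the factor $d_v / (d_{v,f_a}\, d_{ve})$ at $v$ appearing in $\lk_\Gamma(a,a')$ and in $\lk_{\wt{\Gamma}_1}(a,\alpha_1)$, and compare it with the factor $d_v / d_{v,f_a}$ appearing in $\ell_1(a)$, verifying that the quotient is exactly $1/d_{ve}$. Once this bookkeeping is done uniformly at $v$, at $w$, and at the new node of $\Gamma(a,b)$, the three factorizations are immediate and the two conservation identities close the argument in a single line.
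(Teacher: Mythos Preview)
Your argument is correct and follows essentially the same route as the paper's proof: both rest on the multiplicativity of linking numbers under splicing and on identifying $a$ and $b$ as the total linking of the new arrowhead with the old ones on the opposite side. The only cosmetic difference is that the paper works directly with the quantities $\lk_{\Gamma_i}(\,\cdot\,,a^i_{\mathrm{new}})$ (citing \cite[page~28]{EN} and \cite[Proposition~1.2]{EN}) rather than your $\ell_i(\,\cdot\,)=\lk_{\Gamma_i}(\,\cdot\,,v)$, so its formulae are free of the factors $d_{ve},d_{we}$ that you carry and then cancel; otherwise the computations are identical.
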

\begin{proof}
Let $a^{1}_{new}$ and $a^{2}_{new}$ be the arrowheads of $\Gamma_1$, respectively of $\Gamma_2$, which appear as a result of cutting $\Gamma$ along
the edge $e$. Let  $a^{j}_1,\dots,a^{j}_{k_j}$ be the other arrowheads of $\Gamma_j$, $j=1,2$. These arrowheads might be regarded also
as arrowheads lying on $\Gamma$.

By the arguments in \cite[page 28]{EN} we have
\[a=\sum_{t=1}^{k_2}\lk_{\Gamma_2}(a^{2}_{new},a^2_{t}),\ \ \ b=\sum_{t=1}^{k_1}\lk_{\Gamma_1}(a^{1}_{new},a^1_{t}),\]
where the subscripts denote on which graph is the linking number computed.
Furthermore for any $r=1,\dots,k_1$ and any $t=1,\dots,k_2$ we have by \cite[Proposition 1.2]{EN}
\[\lk_\Gamma(a^1_r,a^2_t)=\lk_{\Gamma_1}(a^1_r,a^1_{new})\cdot\lk_{\Gamma_2}(a^2_{new}a^2_t).\]

Substituting the last two equations into the definition of $\Sl$, after straightforward computations we obtain the desired result.
\end{proof}

Given Lemma~\ref{lem:S4} we can compute the difference $S(\Gamma)-S(\Gamma_1)-S(\Gamma_2)$.
\begin{itemize}
\item $\Sl(\Gamma)-\Sl(\wt{\Gamma}_1)-\Sl(\wt{\Gamma}_2)=-2ab$ by Lemma~\ref{lem:S4};
\item $\Sn(\Gamma)-\Sn(\wt{\Gamma}_1)-\Sn(\wt{\Gamma}_2)=-a-b$ (the nodes from $\Gamma(a,b)$ and $\Gamma(b,a)$ contribute);
\item $\Sv(\Gamma)-\Sv(\wt{\Gamma}_1)-\Sv(\wt{\Gamma}_2)=\frac{1}{a}+\frac{1}{b}$ (there is a contribution of a single leaf
in $\Gamma(a,b)$ and $\Gamma(b,a)$);
\item $\Se(\Gamma)-\Se(\wt{\Gamma}_1)-\Se(\wt{\Gamma}_2)=-c^2(\frac{1}{ab(b+1)}+\frac{1}{ab(a+1)}-\frac{1}{ab})$,
where $c=\gcd(a,b)$. To explain this, observe that we cut an edge $e$ of $\Gamma$ and obtain two new edges: one in $\wt{\Gamma}_1$
and the other one in $\wt{\Gamma}_2$. The contribution of the edge on $\Gamma$ is $c^2(\frac{d_{ve}}{m_va}+\frac{d_{we}}{m_wb}-\frac{1}{ab})$,
The new edge on $\wt{\Gamma}_1$ contributes by $c^2(\frac{d_{ve}}{m_va}+\frac{1}{ab(b+1)}-\frac{1}{ab})$, the contribution of the new edge on
$\wt{\Gamma}_2$ is $c^2(\frac{d_{we}}{m_wb}+\frac{1}{ab(a+1)}-\frac{1}{ab})$.  The formula follows.
\item $\Sa(\Gamma)-\Sa(\wt{\Gamma}_1)-\Sa(\wt{\Gamma}_2)=-\frac{1}{a(b+1)}-\frac{1}{a(b+1)}$, in fact, $\wt{\Gamma}_1$ and $\wt{\Gamma}_2$
have exactly two new arrowheads as compared to $\Gamma$.
\end{itemize}

In particular we see that

\[S(\Gamma)-S(\wt{\Gamma}_1)-S(\wt{\Gamma}_2)=-3(\os(\Gamma(a,b))+\os(\Gamma(b,a))+1).\]

We use now this equation together with \eqref{eq:once}. Since $S(\wt{\Gamma}_j)=-3\os(\wt{\Gamma}_j)$ ($j=1,2$) by the assumption of the proposition,
the proof is finished.

\subsection{Proof of Proposition~\ref{prop:inductstep}. Second case}\label{sec:inductstep2}
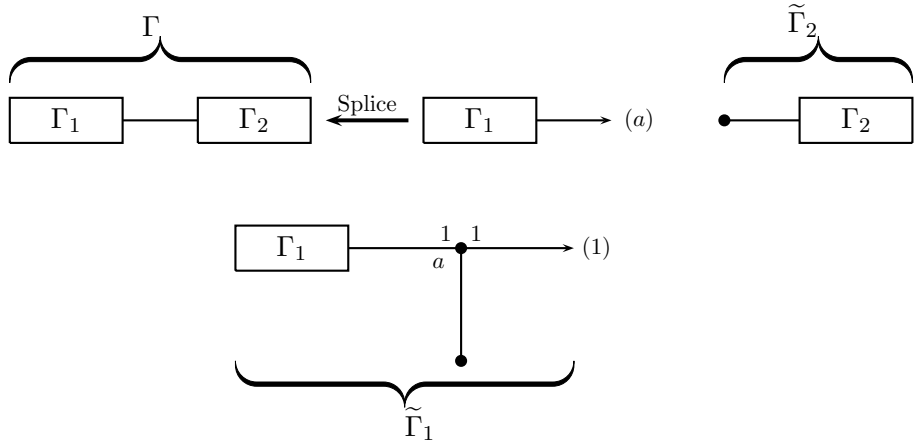
\begin{figure}[h]
\begin{pspicture}(-6,-4)(6,1.5)
\psline(-6,-0.3)(-6,0.3)(-4.5,0.3)(-4.5,-0.3)(-6,-0.3)
\rput(-5.25,0){$\Gamma_1$}
\psline(-3.5,-0.3)(-3.5,0.3)(-2,0.3)(-2,-0.3)(-3.5,-0.3)
\rput(-2.75,0){$\Gamma_2$}
\rput(-6,0.5){\rnode{A}{}}
\rput(-2,0.5){\rnode{B}{}}
\rput(3.5,0.5){\rnode{C}{}}
\rput(6.0,0.5){\rnode{D}{}}
\psbrace(B)(A){\rotateright{$\Gamma$}}
\psbrace(D)(C){\rotateright{$\wt{\Gamma}_2$}}
\psline[linewidth=1.5pt]{<-}(-1.8,0)(-0.7,0)\rput(-1.25,0.2){\psscalebox{0.8}{Splice}}
\psline(-3.5,0)(-4.5,0)
\psline(-0.5,-0.3)(-0.5,0.3)(1,0.3)(1,-0.3)(-0.5,-0.3)
\rput(0.25,0){$\Gamma_1$}
\psline(6.0,-0.3)(6.0,0.3)(4.5,0.3)(4.5,-0.3)(6.0,-0.3)
\rput(5.255,0){$\Gamma_2$}
\psline{->}(1,0)(2,0)\psline{-}(4.5,0)(3.5,0)
\pscircle[fillcolor=black,fillstyle=solid](3.5,0){0.08}
\rput(2.37,0.0){\psscalebox{0.8}{$(a)$}}
\rput(3,-1.7){
\psline(-6,-0.3)(-6,0.3)(-4.5,0.3)(-4.5,-0.3)(-6,-0.3)
\rput(-5.25,0){$\Gamma_1$}
\psline{->}(-4.5,0)(-1.5,0)\psline(-3,0)(-3,-1.5)
\pscircle[fillcolor=black,fillstyle=solid](-3,0){0.08}
\pscircle[fillcolor=black,fillstyle=solid](-3,-1.5){0.08}
\rput(-3.2,0.2){\psscalebox{0.8}{$1$}}
\rput(-3.3,-0.2){\psscalebox{0.8}{$a$}}
\rput(-2.8,0.2){\psscalebox{0.8}{$1$}}
\rput(-1.2,0){\psscalebox{0.8}{$(1)$}}
\rput(-6,-1.5){\rnode{E}{}}
\rput(-1.5,-1.5){\rnode{F}{}}
\psbrace(E)(F){\rotateleft{$\wt{\Gamma}_1$}}
}
\end{pspicture}
\caption{Cutting graph. One of the multiplicities is $0$.}\label{fig:below}
\end{figure}

We shall suppose that $b=0$. Then it cannot happen that $a=0$, for otherwise the graph has no arrowheads at all.

Observe that in that case $\os(\Gamma_2)=\os(\wt{\Gamma}_2)$.
Since $b=0$, we have $\os(\Gamma(a,b))=0$ (the graph represents an unknot), hence $\os(\Gamma_1)=\os(\wt{\Gamma}_1)$ by Lemma~\ref{lem:spladds} (notice
that $\eta=0$ in this case). In particular, we have
\[\os(\Gamma)=\os(\Gamma_1)+\os(\Gamma_2)=\os(\wt{\Gamma}_1)+\os(\wt{\Gamma}_2).\]

Now we look at the difference $S(\Gamma)-S(\wt{\Gamma}_1)-S(\wt{\Gamma}_2)$. Recall that the edge that is cut is denoted by $e$
and it connects vertices $v$ and $w$.

\begin{itemize}
\item $\Sl(\Gamma)-\Sl(\wt{\Gamma}_1)-\Sl(\wt{\Gamma}_2)=0$. The argument is as in Lemma~\ref{lem:S4}. This time $b=0$;
\item $\Sn(\Gamma)-\Sn(\wt{\Gamma}_1)-\Sn(\wt{\Gamma}_2)=-a$ (contribution of the vertex of $\Gamma(a,b)$);
\item $\Sv(\Gamma)-\Sv(\wt{\Gamma}_1)-\Sv(\wt{\Gamma}_2)=\frac{1}{a}+\frac{d_w}{d_{we}^2}$, the contribution is from the leaf of $\Gamma(a,b)$
and from the new leaf of $\wt{\Gamma}_2$.
\item $\Se(\Gamma)-\Se(\wt{\Gamma}_1)-\Se(\wt{\Gamma}_2)=\left(\frac{1}{d_v}-\frac{d_w}{d_{we}^2}\right)-\left(\frac{1}{d_w}-a\right)$. The first
expression is the contribution from the edge $e$ in $\Gamma$, the other comes from the new edge in $\wt{\Gamma}_1$ connecting $\Gamma_1$ to $\Gamma(a,b)$.
\item $\Sa(\Gamma)-\Sa(\wt{\Gamma}_1)-\Sa(\wt{\Gamma}_2)=-\frac{1}{a}$.
\end{itemize}

In particular we see that $S(\Gamma)=S(\wt{\Gamma}_1)+S(\wt{\Gamma}_2)$. By the induction assumption we conclude
the proof of  Proposition~\ref{prop:inductstep} as in the case $b\neq 0$ in Section~\ref{sec:inductstep}.

\end{document}